\newcommand{\R}{\mathbb{R}}
\newcommand{\N}{\mathbb{N}}
\newcommand{\C}{\mathbb{C}}
\newcommand{\Cci}{C_{c}^{\infty}}
\newcommand{\ssubset}{\subset\joinrel\subset}
\def\avgint{\,\ThisStyle{\ensurestackMath{%
  \stackinset{c}{.2\LMpt}{c}{.5\LMpt}{\SavedStyle-}{\SavedStyle\phantom{\int}}}%
  \setbox0=\hbox{$\SavedStyle\int\,$}\kern-\wd0}\int}
\DeclareMathOperator{\Real}{Re}
\DeclareMathOperator{\Imag}{Im}
\DeclareMathOperator{\dist}{dist}
\DeclareMathOperator{\supp}{supp}
\DeclareMathOperator{\spanup}{span}
\theoremstyle{plain}
\newtheorem{theorem}{Theorem}[section]
\newtheorem{proposition}[theorem]{Proposition}
\newtheorem{corollary}[theorem]{Corollary}
\newtheorem{lemma}[theorem]{Lemma}
\theoremstyle{definition}
\title[Elliptic problems and holomorphic functions]{Elliptic problems and holomorphic functions in Banach spaces}
\author[W. Arendt]{Wolfgang Arendt}
\address{Wolfgang Arendt\\Institute of Applied Analysis\\Ulm University\\89069 Ulm\\Germany}
\email{wolfgang.arendt@uni-ulm.de}
\author[M. Bernhard]{Manuel Bernhard}
\address{Manuel Bernhard\\Institute of Applied Analysis\\Ulm University\\89069 Ulm\\Germany}
\email{manuel.bernhard@uni-ulm.de}
\author[M. Kreuter]{Marcel Kreuter}
\address{Marcel Kreuter\\Institute of Applied Analysis\\Ulm University\\89069 Ulm\\Germany}
\email{marcel.kreuter@alumni.uni-ulm.de}
\date{\today}
\keywords{Vector-valued holomorphic functions, harmonic functions, elliptic equations, regularity, UMD spaces}
\subjclass[2010]{Primary: 35J25; Secondary: 46B20, 31C05, 30A99}
\begin{document}
\maketitle

\begin{abstract}
In the first part we show that a vector-valued almost separably valued function $f$ is holomorphic (harmonic) if and only if it is dominated by an $L^1_\textnormal{loc}$ function and there exists a separating set $W\subset X'$ such that $\langle f,x'\rangle$ is holomorphic (harmonic) for all $x'\in W$. This improves a known result which requires $f$ to be locally bounded. In the second part we consider classical results in the $L^p$ theory for elliptic differential operators of second order. In the vector-valued setting these results are shown to be equivalent to the UMD property. 
\end{abstract}


\section{Introduction}

Let $f:\Omega\rightarrow X$, where $\Omega$ is an open subset of $\C$ (or $\R^d$) and $X$ is a complex (real) Banach space. The function $f$ is called \emph{holomorphic (harmonic)} if it is complex differentiable (twice partially differentiable with $\Delta f=0$). The first part of this article is concerned with a criterion for vector-valued holomorphy (harmonicity). The function $f$ is called \emph{weakly holomorphic (weakly harmonic)} if $x'\circ f$ is holomorphic (harmonic) for all $x'\in X'$. We say that $f$ is \emph{very weakly holomorphic (very weakly harmonic)} if there exists a separating subset $W\subset X'$ such that $x'\circ f$ is holomorphic (harmonic) for all $x'\in W$.\\

It was shown in \cite{grosseborelokada} (see also \cite{GrosseCriterion}) that a vector-valued function $f$ is holomorphic if and only if it is locally bounded and very weakly holomorphic. This answered a question posted in \cite{WrobelAnalyticFunctions} ten years earlier. A very short proof was given in \cite{ArendtNikolskiHolomorphic}. In \cite{ArendtHarmonic} it was shown that a similar approach yields the analogous result for harmonic functions. The first part of this paper is concerned with an improvement of these results. It is known, that very weak holomorphy alone is not sufficient \cite[Theorem 1.5]{ArendtNikolskiHolomorphic}. However, we will show that the boundedness assumption can be weakened. We say that a set $\mathcal{F}$ of functions from $\Omega$ to $X$ is \emph{locally $L^1$-bounded} if there exists a function $g\in L^1_\textnormal{loc}(\Omega,\R)$ such that $\|f(\xi)\|_X\leq g(\xi)$ holds for almost all $\xi\in\Omega$ and for all $f\in\mathcal{F}$. A single function $f:\Omega\rightarrow X$ is called \emph{locally $L^1$-bounded} if $\{f\}$ is locally $L^1$-bounded. A net $\{f_i\}_{i\in I}$ is called \emph{locally $L^1$-bounded} if $\{f_i,i\in I\}$ is locally $L^1$-bounded. Our result is the following.

\begin{theorem}\label{very weakly holomorphic harmonic}
Let $X$ be a Banach space and $\Omega\subset\C$ ($\Omega\subset\R^d$) be open. A function $f:\Omega\rightarrow X$ is holomorphic (harmonic) if and only if it is locally $L^1$-bounded and very weakly holomorphic (very weakly harmonic).
\end{theorem}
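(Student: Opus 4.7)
The ``only if'' direction is immediate since a holomorphic (harmonic) function is continuous, hence locally bounded, hence locally $L^1$-bounded, and of course very weakly holomorphic (harmonic) taking $W=X'$. For the ``if'' direction the idea is to reduce to the already known locally bounded case of \cite{grosseborelokada,ArendtHarmonic,ArendtNikolskiHolomorphic} by smoothing $f$ with a mollifier.

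\textbf{Step 1 (strong measurability).} Before mollifying we have to know that $f$ is strongly measurable so that the Bochner convolution makes sense. Fix an open ball $B\ssubset\Omega$. Using that each $\langle f,x'\rangle$ with $x'\in W$ is continuous on $\Omega$, together with the local $L^1$-bound $\|f\|\le g$, one extracts (by a standard Pettis-type argument, applied on $B$) a countable subset $W_0\subset W$ that is still separating for the essential range of $f|_B$, and then shows that $f|_B$ is almost separably valued and weakly measurable, hence strongly measurable by Pettis' theorem.

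\textbf{Step 2 (mollification).} Let $\rho_\epsilon\in\Cci(\C)$ be a radial mollifier with $\int\rho_\epsilon=1$ and $\supp\rho_\epsilon\subset B(0,\epsilon)$. On $\Omega_\epsilon:=\{z\in\Omega:\dist(z,\partial\Omega)>\epsilon\}$ define the Bochner integral
\begin{equation*}
f_\epsilon(z)\;:=\;\int_{\C} f(z-w)\,\rho_\epsilon(w)\,dw,
\end{equation*}
which is well defined by Step~1 and the local $L^1$-bound. Standard arguments give that $f_\epsilon\in C^\infty(\Omega_\epsilon,X)$; in particular $f_\epsilon$ is locally bounded on $\Omega_\epsilon$.

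\textbf{Step 3 (reduction to the bounded case).} For each $x'\in W$ the scalar function $\langle f,x'\rangle$ is holomorphic (harmonic) on $\Omega$, and
\begin{equation*}
\langle f_\epsilon,x'\rangle\;=\;\langle f,x'\rangle * \rho_\epsilon
\end{equation*}
on $\Omega_\epsilon$. Since convolution with a radial mollifier preserves holomorphy (respectively harmonicity) by the mean-value property, $\langle f_\epsilon,x'\rangle$ is holomorphic (harmonic) for every $x'\in W$. Because $f_\epsilon$ is locally bounded and very weakly holomorphic (harmonic), the result of \cite{grosseborelokada} (respectively \cite{ArendtHarmonic}) applies and shows that $f_\epsilon$ itself is holomorphic (harmonic) on $\Omega_\epsilon$.

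\textbf{Step 4 (passing to the limit).} Fix a disk $\overline{B(z_0,r)}\subset\Omega$. For all sufficiently small $\epsilon$ the functions $f_\epsilon$ are holomorphic (harmonic) on a neighbourhood of $\overline{B(z_0,r)}$, and
\begin{equation*}
\sup_\epsilon\int_{B(z_0,r')}\|f_\epsilon(\xi)\|\,d\xi\;\le\;\int_{B(z_0,r'+\epsilon_0)} g(\xi)\,d\xi\;<\;\infty.
\end{equation*}
The mean-value property applied to the holomorphic (harmonic) vector-valued $f_\epsilon$ converts this uniform $L^1$-bound into a uniform pointwise bound on smaller disks (via $\|f_\epsilon(z)\|\le C\avgint_{B(z,\delta)}\|f_\epsilon\|$), and Cauchy-type estimates for derivatives give equicontinuity. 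Extracting a subsequence via a vector-valued Arzel\`a-Ascoli/Montel argument (together with the separability from Step~1) yields $f_{\epsilon_n}\to\tilde f$ locally uniformly on $B(z_0,r)$, with $\tilde f$ holomorphic (harmonic).

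\textbf{Step 5 (identification).} For each $x'\in W$ we have both $\langle f_{\epsilon_n},x'\rangle\to\langle\tilde f,x'\rangle$ locally uniformly and $\langle f_{\epsilon_n},x'\rangle=\langle f,x'\rangle*\rho_{\epsilon_n}\to\langle f,x'\rangle$ locally uniformly on $B(z_0,r)$ (the latter because $\langle f,x'\rangle$ is continuous). Hence $\langle\tilde f,x'\rangle=\langle f,x'\rangle$ pointwise on $B(z_0,r)$ for every $x'\in W$, and since $W$ separates points of $X$ we obtain $\tilde f\equiv f$ on $B(z_0,r)$. As $B$ was arbitrary, $f$ is holomorphic (harmonic) on $\Omega$.

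\textbf{Main obstacle.} The delicate step is Step~1: upgrading very weak holomorphy on a (possibly large) separating set $W$, together with the $L^1$-domination, to genuine strong measurability of $f$, so that the Bochner convolution is legitimate. Once that is in place, Steps~2-5 are essentially routine combinations of standard real-variable smoothing, the previously known locally bounded case, and the fact that very weak scalar components of $f$ are already continuous so the limit can be identified with $f$ pointwise rather than merely almost everywhere.
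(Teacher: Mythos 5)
Your strategy (mollify, invoke the known locally bounded case, identify the limit) is essentially the paper's first proof, but the paper only runs that argument when $X$ is \emph{separable}, and your Step~1 is exactly where the general case breaks down. To extract a countable subset $W_0\subset W$ that still separates the (essential) range of $f|_B$ you must already know that this range is separable; and to apply Pettis' theorem you must know that $f$ is almost separably valued. Neither follows from the hypotheses: local $L^1$-domination plus holomorphy of $\langle f,x'\rangle$ for $x'\in W$ gives no a priori control on the range of $f$ (separability of the range is a \emph{consequence} of the theorem, via continuity of $f$, not an input). So Step~1 is circular for non-separable $X$, and without it the Bochner convolution in Step~2 is not defined. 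The paper's proof for general $X$ avoids integrating $f$ altogether: it applies the Krein--\v{S}mulyan theorem to $Y=\{x'\in X': x'\circ f \text{ holomorphic}\}$, proves a Vitali-type statement (Proposition \ref{pointwise limit}) for locally $L^1$-bounded nets of \emph{scalar} functions $x_i'\circ f$ to show $Y\cap B_{X'}$ is weak-$*$ closed, concludes $Y=X'$, and finishes with Grothendieck's theorem that weakly holomorphic implies holomorphic. If you want to keep the mollification route, you must either assume $X$ separable (as the paper does in Section 2) or supply the missing measurability argument, which I do not see how to do.

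A second, more local problem: in Step~4 you extract a locally uniformly convergent subsequence of $(f_\epsilon)$ by an ``Arzel\`a--Ascoli/Montel argument.'' Montel's theorem fails for infinite-dimensional targets: a bounded equicontinuous family of $X$-valued functions has no convergent subsequence unless its values lie in a compact set, and separability does not rescue this. Fortunately Steps~4--5 are an unnecessary detour: since $\rho_\epsilon$ is radial and $\langle f,x'\rangle$ is harmonic (resp.\ holomorphic), the mean value property gives $\langle f_\epsilon,x'\rangle=\langle f,x'\rangle*\rho_\epsilon=\langle f,x'\rangle$ \emph{exactly} on $\Omega_\epsilon$ for every $x'\in W$, so $f_\epsilon=f$ on $\Omega_\epsilon$ by separation, with no limiting argument at all. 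This is precisely how the paper's separable-case proof concludes. So the compactness step should be deleted rather than repaired, and the only genuine gap that remains is the measurability in Step~1.
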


We give two proofs of this result, one of which is very short, but is only valid if $X$ is separable, and one that follows the approach in \cite{ArendtNikolskiHolomorphic} and \cite{ArendtHarmonic}. The first proof will also yield a shortcut proof for the vector-valued version of Weyl's lemma. This result will be needed in the second part of the paper. The set $\mathcal{L}(\mathcal{D}(\Omega,R),X)$ of $X$-valued distributions on $\Omega$ will be denoted by $\mathcal{D}'(\Omega,X)$.

\begin{theorem}[Weyl]\label{Weyl}
Let $\Omega\subset\R^d$ be open, $X$ be a Banach space and let $f\in L^1_\textnormal{loc}(\Omega,X)$ such that $\Delta f=0$ in $\mathcal{D}'(\Omega,X)$. Then $f$ has a harmonic representative; that is, there exists $f^\ast\in C^\infty(\Omega,X)$ such that $\Delta f^\ast=0$ and $f=f^\ast$ almost everywhere. 
\end{theorem}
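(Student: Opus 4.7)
The plan is the classical mollification proof adapted to the Bochner setting: mollify $f$ to obtain smooth harmonic approximants $f_\varepsilon$, show these stabilize as $\varepsilon \to 0$ by a mean-value argument, and identify the limit with a smooth harmonic representative of $f$. Fix a radial mollifier $\rho \in \Cci(\R^d)$ with $\supp \rho \subset \overline{B_1(0)}$, $\rho \geq 0$, $\int \rho = 1$, and set $\rho_\varepsilon(x) := \varepsilon^{-d}\rho(x/\varepsilon)$. For $\varepsilon > 0$ write $\Omega_\varepsilon := \{x \in \Omega : \dist(x,\partial\Omega) > \varepsilon\}$ and define the Bochner convolution
\[
f_\varepsilon(x) := \int_\Omega \rho_\varepsilon(x-y)\, f(y)\, dy, \qquad x \in \Omega_\varepsilon.
\]

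First I would verify that $f_\varepsilon \in C^\infty(\Omega_\varepsilon, X)$ via differentiation under the Bochner integral, and that $\Delta f_\varepsilon \equiv 0$ on $\Omega_\varepsilon$. Harmonicity reduces to the distributional hypothesis: for $x \in \Omega_\varepsilon$ the function $\phi := \rho_\varepsilon(x-\cdot)$ belongs to $\Cci(\Omega)$ and satisfies $\Delta_y \phi(y) = \Delta_x \rho_\varepsilon(x-y)$, so
\[
\Delta f_\varepsilon(x) = \int_\Omega \Delta_y \phi(y)\, f(y)\, dy = \langle \Delta f, \phi\rangle = 0.
\]

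The crux of the argument is the identity $f_\varepsilon = f_\delta$ on $\Omega_{\varepsilon+\delta}$ for all $\varepsilon,\delta>0$. By Fubini in the Bochner setting, $\rho_\varepsilon * f_\delta = \rho_\delta * f_\varepsilon$. For each $x' \in X'$, the scalar function $x' \circ f_\delta$ is smooth and harmonic on $\Omega_\delta$ and hence enjoys the spherical mean value property, so the standard radial-mollifier computation gives $x' \circ (\rho_\varepsilon * f_\delta) = x' \circ f_\delta$ on $\Omega_{\varepsilon+\delta}$; Hahn--Banach then upgrades this to $\rho_\varepsilon * f_\delta = f_\delta$. The symmetric argument yields $\rho_\delta * f_\varepsilon = f_\varepsilon$, and combining both gives the claim.

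Consequently $f^\ast(x) := f_\varepsilon(x)$ for any $\varepsilon < \tfrac12\dist(x, \partial\Omega)$ is a well-defined function in $C^\infty(\Omega, X)$ with $\Delta f^\ast = 0$. To conclude $f = f^\ast$ a.e., I would invoke $f_\varepsilon \to f$ in $L^1_{\textnormal{loc}}(\Omega, X)$, which is standard for Bochner-integrable functions (via simple-function approximation); since $f_\varepsilon$ is eventually equal to $f^\ast$ on every compact subset, this forces $f = f^\ast$ almost everywhere. The main obstacle is not conceptual but the need to transfer the classical scalar tools (differentiation under the integral, Fubini, the mean value property, $L^1$-density of smooth approximations) to the Bochner setting; each step is routine once $f$ is known to be strongly measurable, but all must be applied consistently.
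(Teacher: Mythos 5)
Your proof is correct, but it follows a genuinely different route from the paper's. The paper deduces harmonicity of the mollification $f_r=\rho_r\ast f$ by first reducing to separable $X$ with a countable separating set $W\subset X'$, invoking the \emph{scalar} Weyl lemma (via its Lemma \ref{harmonic representative real}) to get $\langle f_r,x'\rangle=\langle f,x'\rangle$ a.e.\ for each $x'\in W$, hence $f_r=f$ a.e.\ on $\Omega_r$ after collecting countably many null sets, and then applying its Theorem \ref{very weakly holomorphic harmonic} to conclude that $f_r$ is harmonic. You instead obtain $\Delta f_\varepsilon=0$ directly by moving the Laplacian onto the test function $\rho_\varepsilon(x-\cdot)$ and using the distributional hypothesis, establish consistency $f_\varepsilon=f_\delta$ on $\Omega_{\varepsilon+\delta}$ via the Fubini/commutation trick $\rho_\varepsilon\ast f_\delta=\rho_\delta\ast f_\varepsilon$ together with the (scalar, hence by Hahn--Banach vector-valued) mean value property, and identify the limit through $f_\varepsilon\to f$ in $L^1_{\textnormal{loc}}(\Omega,X)$. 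Your argument is the classical mollification proof of Weyl's lemma transplanted to the Bochner setting: it is more self-contained, needing neither the scalar Weyl lemma, nor the separability reduction and null-set bookkeeping, nor the paper's Theorem \ref{very weakly holomorphic harmonic}. What the paper's version buys is economy within its own framework --- it reuses machinery already established for the very-weak-harmonicity criterion --- and it sidesteps the approximate-identity convergence $f_\varepsilon\to f$ by getting the exact a.e.\ identity $f_r=f$ from the scalar Weyl lemma. All the Bochner-theoretic steps you flag (differentiation under the integral, Fubini for compactly supported kernels against $L^1_{\textnormal{loc}}$ functions, $L^1_{\textnormal{loc}}$ convergence of mollifications) are indeed routine once strong measurability of $f$ is in hand, which it is by hypothesis.
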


Recall that $\Delta f=0$ in the sense of distributions means that $\int{f\Delta\varphi}=0$ for all test functions $\varphi\in\Cci(\Omega,\R)$. In view of the formulation of Theorem \ref{very weakly holomorphic harmonic} we want to remark that this is equivalent to saying that there exists a separating set $W\subset X'$ such that $\Delta(x'\circ f)=0$ in $\mathcal{D}'(\Omega,\R)$ for all $x'\in W$.\\

In the second part of the paper, Sections 4 - 6, we investigate some classical elliptic problems. Again, we ask whether the solutions with values in a Banach space have the same regularity as in the scalar case. Using a result by Geiss, Montgomery and Saksman \cite{GeissSingularIntegral} on homogeneous vector-valued multipliers we prove our main technical tool, Thereom \ref{necessity on domains}, on regularity properties of Newtonian potentials. This result will be used to determine the domain of the Laplacian on $L^p(\R^d,X)$. One of our main results shows that on a bounded domain $\Omega$ of class $C^{1,1}$ the following classical property characterizes UMD-spaces: Given $f\in L^p(\Omega,X)$ there exists a unique $u\in W^{1,p}_0(\Omega,X)\cap W^{2,p}(\Omega,X)$ solving $\Delta u=f$. More general elliptic operators are also considered in Section 6.\\

Parts of this work are contained in the third author's thesis \cite{KreuterVectorValued}.


\section{Harmonic and Holomorphic Functions -- The Separable Case}

In this section we will give a proof of Theorem \ref{very weakly holomorphic harmonic} in the case of a separable Banach space.

\begin{lemma}\label{harmonic representative real}
Let $\Omega\subset\R^d$ be open and let $f\in L^1_\textnormal{loc}(\Omega,\R)$ such that $\Delta f=0$ in $\mathcal{D}'(\Omega,\R)$. Furthermore let $\rho_r$ be a mollifier supported in $B(0,r)$, $r>0$, consisting of radial functions. Then $\rho_r\ast f=f$ almost everywhere in $\Omega_r:=\{\xi\in\Omega,\dist(\xi,\partial\Omega)>r\}$.
\end{lemma}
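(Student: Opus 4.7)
The plan is to show the identity by first mollifying $f$ to land in the classical setting and then commuting convolutions. The point is that the desired equality $\rho_r\ast f=f$ on $\Omega_r$ is a disguised form of the spherical mean value property (applied to a \emph{radial} weight), which I know for classical smooth harmonic functions; I just need to transport it from the smooth case to the distributional one.

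First I would fix a standard family of radial mollifiers $\rho_\epsilon$ and set $g_\epsilon:=\rho_\epsilon\ast f\in C^\infty(\Omega_\epsilon)$. The hypothesis $\Delta f=0$ in $\mathcal{D}'(\Omega,\R)$ gives $\Delta g_\epsilon=\rho_\epsilon\ast(\Delta f)=0$ classically on $\Omega_\epsilon$, so $g_\epsilon$ is a genuine smooth harmonic function on $\Omega_\epsilon$. Next I would record the following fact, which is the heart of the matter: if $u\in C^2(U)$ is harmonic on $U$ and $\rho$ is a radial, compactly supported function in $B(0,r)$ with $\int\rho=1$, then $\rho\ast u=u$ on $U_r$. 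This follows by switching to polar coordinates,
\[
(\rho\ast u)(x)=\int_0^r\tilde\rho(s)\,s^{d-1}\!\!\int_{S^{d-1}}\!u(x-s\omega)\,d\sigma(\omega)\,ds,
\]
and invoking the spherical mean value property on the inner integral, which produces the constant $|S^{d-1}|u(x)$ and collapses the remaining integral to $\int\rho=1$.

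Applying this fact to $u=g_\epsilon$ on $U=\Omega_\epsilon$ yields $\rho_r\ast g_\epsilon=g_\epsilon$ on $\Omega_{r+\epsilon}$. Using associativity/commutativity of convolution, I can rewrite the left-hand side as $\rho_\epsilon\ast(\rho_r\ast f)$, giving
\[
\rho_\epsilon\ast(\rho_r\ast f)=\rho_\epsilon\ast f\quad\text{on }\Omega_{r+\epsilon}.
\]
Now I would let $\epsilon\to 0$. Since both $f$ and $\rho_r\ast f$ lie in $L^1_\textnormal{loc}(\Omega_r)$ (the latter is even $C^\infty$), the standard mollifier convergence (e.g.\ via Lebesgue points) gives $\rho_\epsilon\ast f\to f$ and $\rho_\epsilon\ast(\rho_r\ast f)\to\rho_r\ast f$ almost everywhere on $\Omega_r$. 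Exhausting $\Omega_r$ by the sets $\Omega_{r+\epsilon}$ yields $\rho_r\ast f=f$ a.e.\ on $\Omega_r$.

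There is no serious obstacle: the only subtlety is making sure the radial assumption on $\rho_r$ is used at exactly the right place, namely in reducing the convolution integral to spherical averages so that the mean value property can be applied. All other ingredients (smoothness of mollifications, commutativity of convolution, a.e.\ convergence of $\rho_\epsilon\ast h$ to $h$ for $h\in L^1_\textnormal{loc}$) are entirely standard.
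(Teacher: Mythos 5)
Your proof is correct, but it takes a different route from the paper's. The paper gets into the classical setting in one step by citing the scalar Weyl lemma: since $\Delta f=0$ distributionally, $f$ has a smooth harmonic representative $f^\ast$, so $\rho_r\ast f=\rho_r\ast f^\ast=f^\ast=f$ a.e.\ on $\Omega_r$, where the middle equality is exactly the radial-mollifier mean value property you also use (cited from Evans). You instead avoid Weyl's lemma entirely: you mollify first, observe that $g_\epsilon=\rho_\epsilon\ast f$ is classically harmonic on $\Omega_\epsilon$, apply the mean value property to $g_\epsilon$, commute the two convolutions to get $\rho_\epsilon\ast(\rho_r\ast f)=\rho_\epsilon\ast f$ on $\Omega_{r+\epsilon}$, and pass to the limit $\epsilon\to 0$ using Lebesgue points. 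Both arguments share the same heart (the spherical mean value property against a radial weight), but yours is self-contained and in fact contains a proof of the scalar Weyl lemma as a by-product (since $\rho_r\ast f$ is a smooth harmonic representative of $f$), whereas the paper trades that limiting argument for a citation. The only steps worth spelling out in your version are the justification of $\Delta(\rho_\epsilon\ast f)=\rho_\epsilon\ast\Delta f$ via the distributional hypothesis tested against $\rho_\epsilon(x-\cdot)$, and Fubini for the convolution exchange, both of which are routine for $f\in L^1_{\mathrm{loc}}$.
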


\begin{proof}
Since $\Delta f=0$ distributionally there exists a harmonic representative $f^\ast$ of $f$ by Weyl's Lemma in the real-valued case \cite[Chapter II, \S3, Proposition 1]{DautrayLionsMathematicalAnalysis}. Since $f=f^\ast$ almost everywhere it follows that $\rho_r\ast f=\rho_r\ast f^\ast$ everywhere in $\Omega$. Now by \cite[Chapter 2, Proof of Theorem 6]{Evanspde} we have that $\rho_r\ast f^\ast=f^\ast$ on $\Omega_r$ from which the claim follows.
\end{proof}

\begin{proof}[Proof of Theorem \ref{very weakly holomorphic harmonic} if $X$ is separable]
(a) We start with the case that $f$ is very weakly harmonic. Then $f$ is very weakly measurable, and it follows from the Krein-\v{S}mulyan theorem (c.f. Section \ref{general proof}) that $f$ is measurable -- for a full proof we refer the reader to \cite[Theorem 1.2]{ArendtDegenerate} or \cite[Theorem 1.1.20]{HytonenetalAnalysisinBanachspaces}. Since $f$ is locally $L^1$-bounded it follows that $f\in L^1_\textnormal{loc}(\Omega,X)$. Let $\rho_r$ be a mollifier supported in $B(0,r)$. Then the function $f_r:=\rho_r\ast f$ is well-defined and smooth in $\Omega_r$. Lemma \ref{harmonic representative real} shows that $\langle f_r,x'\rangle=\rho_r\ast\langle f,x'\rangle=\langle f,x'\rangle$ in $\Omega_r$ for every $x'$ in the separating set $W\subset X'$ for which $\langle f,x'\rangle$ is harmonic. Since $W$ is separating it follows that $f_r=f$ in $\Omega_r$. In particular: $f$ is smooth and hence -- using again that $W$ is separating -- $\Delta f=0$.\\

(b) Now we come to the case where $f$ is very weakly holomorphic. Analogously to the harmonic case one sees that $f$ is locally integrable. Let $z_0\in\Omega$ and let $r_0>0$ such that $B(z_0,r_0)\ssubset\Omega$. Since $f$ is integrable on $B(z_0,r_0)$ it follows from Fubini's theorem that $f$ is integrable on the sphere $S(z_0,r)$ for almost all $r\leq r_0$. Choose such an $r$ and define
\begin{align*}
u(z):=\frac{1}{2\pi i}\int_{|w-z_0|=r}{\frac{f(w)}{z-w}\,dw}
\end{align*}
for all $z\in B(z_0,r)$. As in the scalar case one shows that $u$ defines a holomorphic function. Cauchy's integral formula shows that
\begin{align*}
\langle u(z),x'\rangle=\langle f(z),x'\rangle
\end{align*}
for all $x'\in W$ and all $z\in B(z_0,r)$. Since $W$ is separating it follows that $u=f$ and hence $f$ is holomorphic.
\end{proof}

The approach used for the case where $f$ is very weakly harmonic also yields the

\begin{proof}[Proof of Theorem \ref{Weyl}]
Let $\rho_r$ be a mollifier supported in $B(0,r)\ssubset\Omega$. By assumption the function $f_r:=\rho_r\ast f$ is well-defined. Since $f$ is measurable we may assume that $X$ is separable. In this case there exists a countable separating set $W\subset X'$ \cite[Proposition B.1.10]{HytonenetalAnalysisinBanachspaces}. Lemma \ref{harmonic representative real} shows that for every $x'\in W$ there exists a negligible set $N_{x'}$ such that $\langle f_r,x'\rangle=\langle f,x'\rangle$ in $\Omega_r\backslash N_{x'}$. Since $W$ is countable the set $N:=\bigcup_{x'\in W} N_{x'}$ is negligible. Furthermore $W$ separates $X$ and hence $f_r=f$ almost everywhere in $\Omega_r$. For every $x'\in W$ the function $\langle f,x'\rangle$ has a harmonic representative by Weyl's Lemma in the real-valued case \cite[Chapter II, \S3, Proposition 1]{DautrayLionsMathematicalAnalysis}. Since $\langle f_r,x'\rangle$ is a continuous representative of $\langle f,x'\rangle$ in $\Omega_r$ it follows that $\langle f_r,x'\rangle$ is the harmonic representative of $\langle f,x'\rangle$ in $\Omega_r$. Since $W$ is separating it follows from Theorem \ref{very weakly holomorphic harmonic} that $f_r$ is harmonic in $\Omega_r$. The claim now follows by taking a sequence $r_n\rightarrow0$ and defining $f^\ast(\xi):=f_{r_n}(\xi)$, where $\xi\in\Omega_{r_n}$. Then $f^\ast:\Omega\rightarrow X$ is well-defined, harmonic, and conincides with $f$ almost everywhere.
\end{proof}

We want to give a holomorphic version of Theorem \ref{Weyl} using the distributional Cauchy-Riemann equations
\begin{align*}
D_1u&=D_2v\\
D_2u&=-D_2v.
\end{align*}
For vector-valued functions -- which do not have a real or imaginary part -- we make sense of these equations by saying that a function $f:\Omega\rightarrow X$ from an open subset of $\C$ into a complex Banach space satisfies the Cauchy-Riemann equations \emph{very weakly distributionally} if there exists a separating set $W\subset X'$ such that the functions
\begin{align*}
u:=\Real\langle f,x'\rangle\quad\textnormal{and}\quad v:=\Imag\langle f,x'\rangle
\end{align*}
satisfy the Cauchy-Riemann equations distributionally for every $x'\in W$. The following lemma is known, but we present a proof using our results.

\begin{lemma}[{\cite[Theorem 9]{GrayMorrisCauchyRiemann}}]\label{Weyl holomorphic}
Let $\Omega\subset\C$ be open and let $f\in L^1_\textnormal{loc}(\Omega,\C)$ such that $f$ satisfies the Cauchy-Riemann equations distributionally. Then $f$ has a holomorphic representative.
\end{lemma}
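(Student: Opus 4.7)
The plan is to reduce the distributional Cauchy--Riemann equations to the scalar-valued Weyl's Lemma already at our disposal. Write $f=u+iv$ with $u,v\in L^1_\textnormal{loc}(\Omega,\R)$. The distributional identities $D_1u=D_2v$ and $D_2u=-D_1v$ combined with the fact that distributional partial derivatives commute give
\begin{align*}
\Delta u = D_1D_1u+D_2D_2u = D_1D_2v-D_2D_1v = 0
\end{align*}
in $\mathcal{D}'(\Omega,\R)$, and an identical calculation shows $\Delta v=0$.

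Next, I would apply Theorem \ref{Weyl} (or equivalently, the classical scalar Weyl Lemma it generalizes) to each of $u$ and $v$ separately, producing harmonic representatives $u^*,v^*\in C^\infty(\Omega,\R)$. Define $f^*:=u^*+iv^*\in C^\infty(\Omega,\C)$; then $f^*=f$ almost everywhere. Since $u=u^*$ and $v=v^*$ as distributions, the hypothesized distributional Cauchy--Riemann identities transfer verbatim to $u^*$ and $v^*$. But $u^*$ and $v^*$ are smooth, so their distributional partial derivatives coincide with the classical ones, and the distributional CR equations become pointwise identities on $\Omega$. Hence $f^*$ is holomorphic.

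The only real ingredient beyond an appeal to Theorem \ref{Weyl} is the commutativity of distributional partials $D_1D_2v=D_2D_1v$, which is a one-line test-function computation using $\partial_1\partial_2\varphi=\partial_2\partial_1\varphi$ for $\varphi\in\Cci(\Omega,\R)$. Since the harmonic half of the problem has already been handled in full generality, no substantive obstacle remains; the argument is a direct reduction. An alternative I briefly considered --- adapting the Cauchy-integral construction from part~(b) of the proof of Theorem \ref{very weakly holomorphic harmonic} --- would require first regularising $f$ to give meaning to the boundary integral, so the route through $\Delta u=\Delta v=0$ is considerably cleaner.
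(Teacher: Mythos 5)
Your proof is correct, and it takes a genuinely cleaner route than the one in the paper. Both proofs begin by observing that $u=\Real f$ and $v=\Imag f$ are distributionally harmonic (you make the commutativity of distributional partials explicit; the paper elides it), and both ultimately rest on the scalar Weyl lemma. From there you diverge: the paper does not invoke Weyl directly on $u$ and $v$ but instead routes through Lemma~\ref{harmonic representative real}, mollifying $f$ by a radial $\rho_r$ to get $f_r=\rho_r\ast f=f$ a.e.\ on $\Omega_r$, checking that $f_r$ satisfies the classical CR equations there, and then patching the representatives together over a sequence $r_n\to 0$ as in the proof of Theorem~\ref{Weyl}. You instead apply Weyl once to each of $u,v$ to obtain global smooth representatives $u^*,v^*$ on all of $\Omega$, note that equality of $u$ with $u^*$ (and $v$ with $v^*$) as distributions transfers the distributional CR identities to the smooth functions, where they become classical, and conclude directly that $f^*=u^*+iv^*$ is holomorphic. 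This avoids the mollification and the patching step entirely, so as a proof of the lemma in isolation it is shorter and arguably preferable.

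The one thing your route does not produce, and which explains the paper's choice, is the explicit identification $\rho_r\ast f=f$ a.e.\ on $\Omega_r$. That byproduct is exactly what gets cited in the proof of the vector-valued theorem immediately following the lemma (``the proof tells us that in $\Omega_r$ this representative is given by $\rho_r\ast\langle f,x'\rangle=\langle f_r,x'\rangle$''), where one cannot split a Banach-space-valued $f$ into real and imaginary parts and therefore needs a representative constructed in a way that commutes with duality pairings. So your argument is a perfectly good proof of Lemma~\ref{Weyl holomorphic}, but if you substituted it into the paper you would need to either supply the mollification identity separately or rework the proof of the subsequent theorem.
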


\begin{proof}
It follows from the Cauchy-Riemann equations that the functions $u:=\Real f$ and $v:=\Imag f$ are harmonic in $\mathcal{D}'(\Omega,\R)$. Lemma \ref{harmonic representative real} shows that for a radially symmetric mollifier $\rho_r$ supported in $B(0,r)$ we have $f_r:=\rho_r\ast f=f$ almost everywhere in $\Omega_r$. Since $f_r$ is continuously partially differentiable it satisfies the Cauchy-Riemann equations in the classical sense in $\Omega_r$ and thus is holomorphic. We may now define the representative of $f$ analogously to the proof of Theorem \ref{Weyl}.
\end{proof}

\begin{theorem}
Let $\Omega\subset\C$ be open and let $X$ be a complex Banach space. Suppose $f\in L^1_\textnormal{loc}(\Omega,X)$ satisfies the Cauchy-Riemann equations very weakly distributionally. Then $f$ has a holomorphic representative.
\end{theorem}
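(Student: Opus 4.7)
The plan is to bootstrap from the already-established vector-valued Weyl theorem (Theorem \ref{Weyl}) to the holomorphic case, using the characterization of holomorphy via very weak holomorphy (Theorem \ref{very weakly holomorphic harmonic}) as the final step.

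First, I would verify that $\Delta f = 0$ in $\mathcal{D}'(\Omega, X)$. For each $x'\in W$, the real and imaginary parts of $\langle f, x'\rangle$ satisfy the Cauchy--Riemann equations distributionally, and differentiating once more (still distributionally) gives $\Delta\langle f, x'\rangle = 0$ in $\mathcal{D}'(\Omega,\C)$, exactly as at the start of the proof of Lemma \ref{Weyl holomorphic}. Since $W$ separates $X$, the remark following Theorem \ref{Weyl} then yields $\Delta f = 0$ in $\mathcal{D}'(\Omega, X)$. Theorem \ref{Weyl} therefore produces a smooth harmonic representative $f^\ast \in C^\infty(\Omega, X)$ with $f = f^\ast$ almost everywhere.

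It remains to upgrade harmonicity of $f^\ast$ to holomorphy. Fix any $x'\in W$. By Lemma \ref{Weyl holomorphic}, the scalar function $\langle f, x'\rangle$ admits a holomorphic representative $h_{x'}$; on the other hand, $\langle f^\ast, x'\rangle$ is a continuous (indeed smooth) representative of the same $L^1_\textnormal{loc}$ function. Two continuous representatives of a locally integrable function must agree everywhere, so $\langle f^\ast, x'\rangle = h_{x'}$ is holomorphic for every $x'\in W$. Thus $f^\ast$ is very weakly holomorphic with respect to the same separating set $W$, and since $f^\ast$ is continuous it is in particular locally $L^1$-bounded, so Theorem \ref{very weakly holomorphic harmonic} gives that $f^\ast$ is holomorphic.

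I do not expect a serious obstacle: each step is a direct invocation of results already at hand. The only point worth a moment of care is the passage from very weak distributional Cauchy--Riemann to $\Delta f = 0$ in $\mathcal{D}'(\Omega, X)$, where one must pair the $X$-valued distribution $\Delta f$ with elements of the separating subset $W$ of $X'$; this is where the hypothesis that $W$ is merely separating (rather than norming or all of $X'$) is used, and it is handled verbatim as in the preceding proofs.
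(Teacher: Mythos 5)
Your proof is correct. The chain of inferences is sound: (1) the distributional Cauchy--Riemann relations for $\langle f,x'\rangle$, $x'\in W$, imply $\Delta\langle f,x'\rangle=0$, and since $W$ separates $X$ the remark after Theorem \ref{Weyl} upgrades this to $\Delta f=0$ in $\mathcal{D}'(\Omega,X)$; (2) Theorem \ref{Weyl} then produces a smooth harmonic representative $f^\ast$; (3) for each $x'\in W$, $\langle f^\ast,x'\rangle$ and the holomorphic representative $h_{x'}$ from Lemma \ref{Weyl holomorphic} are two continuous representatives of the same $L^1_\textnormal{loc}$ function, hence coincide, so $f^\ast$ is very weakly holomorphic; being smooth, $f^\ast$ is locally $L^1$-bounded, and Theorem \ref{very weakly holomorphic harmonic} finishes the argument.

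The paper takes a slightly different route: it does not pass through the $X$-valued harmonic representative at all, but rather reduces to $X$ separable and $W$ countable, mollifies directly, shows that $f_r=\rho_r\ast f$ equals $f$ a.e.\ in $\Omega_r$ and that each $\langle f_r,x'\rangle$ is holomorphic there, invokes Theorem \ref{very weakly holomorphic harmonic} to conclude $f_r$ is holomorphic on $\Omega_r$, and then glues over a sequence $r_n\downarrow 0$ --- in effect rerunning the proof of Theorem \ref{Weyl} with Cauchy--Riemann in place of harmonicity. Your version factors cleanly through the already-established Theorem \ref{Weyl}, avoiding the repeated mollify-and-glue construction and the explicit reduction to a countable separating set (those are hidden inside Theorem \ref{Weyl}); the only new ingredient is the observation that a harmonic representative whose scalar projections against $W$ are holomorphic must itself be holomorphic. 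The paper's version is marginally more self-contained, yours is shorter and highlights the modularity of the three preceding results; both rely on exactly the same machinery.
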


\begin{proof}
Since $f$ is measurable we may assume that $X$ is separable. By \cite[Theorem B.1.11]{HytonenetalAnalysisinBanachspaces} we may assume that $W$ is countable. Let $\rho_r$ be a radially symmetric mollifier supported in $B(0,r)$ and define $f_r:=\rho_r\ast f$. Let $x'\in W$. By Lemma \ref{Weyl holomorphic} we know that $\langle f,x'\rangle$ has a holomorphic representative and the proof tells us that in $\Omega_r$ this representative is given by $\rho_r\ast\langle f,x'\rangle=\langle f_r,x'\rangle$. Since $W$ is countable, it follows that $f_r$ is a representative of $f$ in $\Omega_r$. Furthermore, Theorem \ref{very weakly holomorphic harmonic} shows that $f_r$ is holomorphic in $\Omega_r$. The representative is then defined as above.
\end{proof}


\section{Harmonic and Holomorphic Functions -- The General Case}\label{general proof}

As announced before, we will now give a proof of Theorem \ref{very weakly holomorphic harmonic} which is valid also in non-seperable spaces. We use arguments of \cite{ArendtNikolskiHolomorphic} and \cite{ArendtHarmonic} but add a new idea to get around with the $L^1_\textnormal{loc}$-hypothesis only. We gather some results which we will need for the proof. By $\sigma_{d-1}$ we denote the $(d-1)$-dimensional Hausdorff measure in $\R^d$ and by $\omega_d$ the Hausdorff measure of the unit sphere $S_{d-1}\subset\R^d$.

\begin{lemma}[{\cite[Lemma 2.3 and Theorem 5.2]{ArendtHarmonic}, also c.f. \cite{GrothendieckVector-valuedholomorphic}}]\label{weakly holomorphic harmonic}
\begin{compactenum}[(a)]
\item Let $\Omega\subset\C$ be open and let $X$ be a Banach space. A function $f:\Omega\rightarrow X$ is holomorphic if and only if it is weakly holomorphic. In this case, $f$ satisfies Cauchy's integral formula
\begin{align*}
f(z)=\frac{1}{2\pi i}\int_{|w-z_0|=r_0}{\frac{f(w)}{w-z}\,dw}
\end{align*}
for all $z_0\in\Omega$ and $r_0>0$ such that $z\in B(z_0,r_0)\ssubset\Omega$.
\item Let $\Omega\subset\R^d$ be open and let $X$ be a Banach space. A function $f:\Omega\rightarrow X$ is harmonic if and only if it is weakly harmonic. In this case, $f$ satisfies Poisson's integral formula
\begin{align*}
f(\xi)=\frac{1}{\omega_d r_0}\int_{S_{d-1}(\xi_0,r_0)}{\frac{r_0^2-|\xi-\xi_0|^2}{|\xi-s|^d}f(s)\,d\sigma_{d-1}(s)}
\end{align*}
for all $\xi_0\in\Omega$ and $r_0>0$ such that $\xi\in B(\xi_0,r_0)\ssubset\Omega$.
\end{compactenum}
\end{lemma}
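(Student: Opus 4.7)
My plan for Lemma~\ref{weakly holomorphic harmonic} is to treat (a) and (b) via a common three-step scheme: bootstrap weak regularity to norm boundedness using the uniform boundedness principle (UBP), upgrade that to norm Lipschitz continuity, and then verify the scalar integral formula vectorially via Hahn--Banach.

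For (a), the ``only if'' direction is immediate from the chain rule applied to $x'\circ f$. For ``if'', I proceed as follows. First, each $x'\circ f$ is scalar holomorphic, hence continuous on any compact $K\subset\Omega$, so the family $\{f(z):z\in K\}$ is weakly bounded; UBP promotes this to $\sup_K\|f\|<\infty$. Second, I fix $\overline{B(z_0,R)}\subset\Omega$ and $r<R$; applying the scalar Cauchy derivative estimate to $x'\circ f$ yields
\begin{equation*}
|(x'\circ f)'(z)|\leq (R-r)^{-1}\sup_{|w-z_0|=R}|x'(f(w))|\leq (R-r)^{-1}\|x'\|\sup_{|w-z_0|=R}\|f(w)\|
\end{equation*}
for $z\in\overline{B(z_0,r)}$, so integrating along a segment and taking the supremum over $\|x'\|\leq 1$ gives a uniform Lipschitz bound $\|f(z)-f(z')\|\leq C|z-z'|$. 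Third, with $f$ norm continuous on $\{|w-z_0|=r_0\}$, the vector-valued Bochner integral
\begin{equation*}
g(z):=\frac{1}{2\pi i}\int_{|w-z_0|=r_0}\frac{f(w)}{w-z}\,dw
\end{equation*}
is well defined and, by differentiation under the integral, holomorphic on $B(z_0,r_0)$. Pairing with any $x'\in X'$ commutes with the Bochner integral and reduces to the scalar Cauchy formula, so $x'(g(z))=x'(f(z))$ for every $x'\in X'$; Hahn--Banach gives $g=f$, simultaneously establishing holomorphy of $f$ and the integral representation.

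For (b), the template is identical. Weak harmonicity makes each $x'\circ f$ scalar continuous, giving local norm boundedness of $f$ by UBP. The classical scalar gradient estimate $|\nabla(x'\circ f)(\xi)|\leq C_d\,r^{-1}\sup_{|\eta-\xi|=r}|x'(f(\eta))|$ for harmonic functions then yields, via the same pairing argument, a local Lipschitz bound on $f$. The vector-valued Poisson integral over $S_{d-1}(\xi_0,r_0)$ is thus a well-defined Bochner integral, it is harmonic in $\xi$ by differentiating under the integral (the Poisson kernel being harmonic in $\xi$), and pairing with each $x'\in X'$ recovers the scalar Poisson formula, so Hahn--Banach yields $f=g$ together with the integral representation.

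The main obstacle, I expect, is avoiding circularity in the middle step: the candidate vector-valued integral is only a Bochner integral if $f$ is already known to be norm continuous on the circle or sphere, which is precisely the regularity we are trying to prove. The bootstrap resolves this by applying UBP (or its direct dualization) at the level of scalar \emph{derivative} bounds --- Cauchy's derivative estimates in (a) and harmonic gradient estimates in (b) --- each of which has the correct scaling in $\|x'\|$ to upgrade weak continuity to norm Lipschitz continuity without any preexisting Bochner structure on $f$.
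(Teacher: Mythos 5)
This lemma is quoted in the paper without proof; the authors cite \cite{ArendtHarmonic} and \cite{GrothendieckVector-valuedholomorphic} rather than arguing the statement themselves, so there is no in-paper proof to compare against. Your blind argument is the standard Dunford-Grothendieck proof (UBP to get local norm boundedness, scalar Cauchy/gradient estimates to upgrade to a uniform Lipschitz bound, then reconstruction via the Bochner-valued Cauchy/Poisson integral and separation by $X'$), and it is correct. Two small remarks: in step two of part (a) the constant in the Cauchy derivative estimate should really carry the factor from $|g'(z)|\le R\,M/(R-r)^2$ (or, if you prefer the sharp $M/\rho$ bound with $\rho=R-r$, you should observe that $\sup_{\partial B(z,\rho)}|g|\le\sup_{\overline{B(z_0,R)}}|g|=\sup_{\partial B(z_0,R)}|g|$ by the maximum principle); either way you still get a fixed Lipschitz constant, so nothing breaks. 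In part (b), the justification that the Poisson integral is harmonic needs the explicit observation (which you note) that the kernel is harmonic in $\xi$ for fixed $s$ on the sphere, so that differentiation under the Bochner integral is legitimate once local uniform norm continuity of $f$ on the sphere is in hand; your bootstrap supplies exactly that, so the proof is complete.
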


\begin{proposition}\label{pointwise limit}
Let $X$ be a Banach space and let $\{f_i\}_{i\in I}$ be a locally $L^1$-bounded net of $X$-valued holomorphic (harmonic) functions on the open set $\Omega\subset\C$ ($\Omega\subset\R^d$). Assume that $f:=\lim_{i\in I}f_i$ exists pointwise in $\Omega$. Then $f$ is a holomorphic (harmonic) function and $f=\lim_{i\in I}f_i$ uniformly on compact sets.
\end{proposition}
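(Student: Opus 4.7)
The plan is to upgrade the $L^1_{\mathrm{loc}}$-bound to a locally \emph{uniform} bound via the mean value property, to obtain equicontinuity from an interior gradient estimate, and then to apply a net version of the Arzelà--Ascoli theorem. The main obstacle is that the dominated convergence theorem fails for nets, so one cannot naively pass the limit through Poisson's or Cauchy's integral representation for $f_i$. Replacing the missing dominated convergence by ``locally uniform bound plus equicontinuity'' is the whole point of the argument.

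First, I would derive the solid mean value property from Lemma \ref{weakly holomorphic harmonic}. Setting $\xi = \xi_0$ in Poisson's formula gives the spherical mean $f_i(\xi_0) = \omega_d^{-1} r^{1-d} \int_{S_{d-1}(\xi_0, r)} f_i \, d\sigma_{d-1}$, and integrating against $r^{d-1}\, dr$ yields
\begin{equation*}
  f_i(\xi) = \avgint_{B(\xi, R)} f_i\, d\lambda
  \qquad \text{whenever } \overline{B(\xi, R)} \subset \Omega;
\end{equation*}
the holomorphic case is analogous via Cauchy's formula. For a compact $K \subset \Omega$, choose $R > 0$ so that $K_R := \{\eta : \dist(\eta, K) \le R\}$ is still compactly contained in $\Omega$. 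Then for every $\xi \in K$ and every $i \in I$,
\begin{equation*}
  \|f_i(\xi)\|_X \le \avgint_{B(\xi, R)} \|f_i(w)\|_X\, dw \le \frac{\|g\|_{L^1(K_R)}}{|B_R|},
\end{equation*}
so $\{f_i\}$ is locally uniformly bounded. Since each $f_i$ is smooth (by Poisson's/Cauchy's formula) and each $\partial_j f_i$ is again harmonic (holomorphic), applying the mean value identity to $\partial_j f_i$ and the divergence theorem gives $\partial_j f_i(\xi) = |B(\xi, R)|^{-1}\int_{\partial B(\xi, R)} f_i\, \nu_j\, d\sigma_{d-1}$, whence $\|\partial_j f_i(\xi)\|_X \le (d/R)\sup_{K_R}\|f_i\|_X$. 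Combined with the previous estimate (applied to a slightly larger compact set), this is a uniform Lipschitz bound on $K$, so $\{f_i\}$ is equicontinuous on every compact subset of $\Omega$.

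Finally, pointwise convergence together with local uniform boundedness and equicontinuity forces uniform convergence on every compact $K \subset \Omega$ by a standard $\varepsilon/3$ argument that carries over verbatim to nets: cover $K$ by finitely many balls of radius $\delta$ so small that equicontinuity gives oscillation less than $\varepsilon/3$ for every $f_i$ (and for $f$, since the equicontinuity bound survives the pointwise limit), and use the directedness of $I$ to produce a single index $i_0$ past which $\|f_i(\xi_k) - f(\xi_k)\| < \varepsilon/3$ holds simultaneously at all finitely many centers $\xi_k$; the triangle inequality closes the argument. With uniform convergence on compacts in hand, I would pass the limit inside Poisson's (respectively Cauchy's) formula for $f_i$ over a sphere $S_{d-1}(\xi_0, r_0)$ with $\overline{B(\xi_0, r_0)} \subset \Omega$. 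Since the Poisson (Cauchy) kernel is smooth and harmonic (holomorphic) in $\xi$, the resulting integral representation exhibits $f$ as a smooth harmonic (holomorphic) function on $B(\xi_0, r_0)$, completing the proof.
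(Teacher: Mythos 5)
Your proof is correct and reaches the conclusion by the same overall strategy as the paper: extract an equicontinuity estimate from the integral representations, upgrade the pointwise convergence of a net to locally uniform convergence via a finite cover of centers, and then pass to the limit under Poisson's (Cauchy's) integral. The route to equicontinuity is genuinely different, though. You first convert the local $L^1$-domination into a locally uniform sup bound via the solid mean value property, and then differentiate, obtaining an interior gradient bound $\|\partial_j f_i\|\leq (d/R)\sup\|f_i\|$ via the divergence theorem. The paper is more direct: it invokes Fubini's theorem to select, near each point, a sphere $\Gamma$ on which $\{f_i\}$ is still $L^1$-bounded, and then reads the uniform Lipschitz constant straight off Cauchy's (resp.\ Poisson's) integral formula over $\Gamma$,
\begin{align*}
  \|f_i(z_1)-f_i(z_2)\|_X\leq\frac{C}{2\pi}\,\|f_i\|_{L^1(\Gamma,X)}\,|z_1-z_2|,
\end{align*}
bypassing both the sup bound and the differentiation step. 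Your detour is a bit longer but yields slightly more (quantitative control of the first derivatives) and is the more familiar harmonic-function route. The net version of Arzel\`a--Ascoli is correctly handled in your sketch: noting that the Lipschitz constant passes to the pointwise limit $f$, and using directedness of $I$ at the finitely many cover centers, is exactly the substitute for dominated convergence that is unavailable for nets, and matches what the paper leaves implicit.
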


\begin{proof}
We start with the case of holomorphic functions. Let $z_0\in\Omega$. By Fubini's theorem the net $\{f_i\}_{i\in I}$ is locally $L^1$-bounded on the set $\{w\in\Omega,|w-z_0|=r_0\}$ for almost all $r_0>0$. Fix such an $r_0>0$ and denote by $\Gamma$ the set $\{w\in\Omega,|w-z_0|=r_0\}$. Cauchy's integral formula yields
\begin{align*}
\|f_i(z_1)-f_i(z_2)\|_X&\leq\frac{1}{2\pi}\int_{\Gamma}{\|f_i(w)\|_X\left|\frac{z_1-z_2}{(w-z_1)(w-z_2)}\right|\,dw}\\
&\leq\frac{C}{2\pi}\|f_i\|_{L^1(\Gamma,X)}|z_1-z_2|
\end{align*}
for all $z_1,z_2\in B(z_0,r_0)$, some constant $C=C(\dist(z_1,\Gamma),\dist(z_2,\Gamma))$ and all $i\in I$. Since the net $\{f_i\}_{i\in I}$ is locally $L^1$-bounded this shows that $\{f_i\}_{i\in I}$ is equicontinuous on compact subsets. Hence $f=\lim_{i\in I}f_i$ exists uniformly on compact sets and it follows that $f$ satisfies Cauchy's integral formula and is thus holomorphic. The case of harmonic functions is treated analogously using Poisson's integral formula.
\end{proof}

\begin{theorem}[{Krein-\v{S}mulyan \cite[Corollary 2.7.12]{MegginsonBanachspacetheory}}]\label{KreinSmulyan}
Let $X$ be a Banach space and let $Y\subset X'$ be a subspace. Then $Y$ is closed in the weak-$\ast$ topology if and only if $Y\cap B_{X'}$ is weakly-$\ast$ closed, where $B_{X'}$ denotes the closed unit ball in $X'$.
\end{theorem}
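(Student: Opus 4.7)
The forward direction is immediate from Banach--Alaoglu: $B_{X'}$ is weak-$\ast$ compact, hence weak-$\ast$ closed, so if $Y$ is weak-$\ast$ closed then $Y \cap B_{X'}$ is weak-$\ast$ closed as an intersection of two weak-$\ast$ closed sets.

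For the converse, which is the substantive content of Krein--\v{S}mulyan, the plan is as follows. Because $Y$ is a subspace, $Y \cap rB_{X'} = r(Y \cap B_{X'})$ for every $r > 0$, so the hypothesis upgrades to: $Y \cap rB_{X'}$ is weak-$\ast$ closed (hence weak-$\ast$ compact by Banach--Alaoglu) for every $r > 0$. Fix $z_0' \in X' \setminus Y$; the task is to produce a weak-$\ast$ open neighborhood of $z_0'$ disjoint from $Y$. I would inductively build a norm-null sequence $(x_n) \subset X$ as follows: having chosen $x_1,\ldots,x_{n-1}$, the set
\begin{align*}
K_n := \{y' \in Y \cap nB_{X'} : |\langle x_k, y' - z_0'\rangle| \leq 1,\ 1 \leq k < n\}
\end{align*}
is weak-$\ast$ compact, convex, and does not contain $z_0'$, so Hahn--Banach separation in the dual pair $(X,X')$ produces some $x_n \in X$ with $|\langle x_n, y' - z_0'\rangle| > 1$ for all $y' \in K_n$; after rescaling, $\|x_n\| < 1/n$. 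The construction then guarantees that every $y' \in Y$ is separated from $z_0'$ by some $x_k$: indeed $y' \in nB_{X'}$ for some $n$, and at least one of the constraints $|\langle x_k, y' - z_0'\rangle| \leq 1$ must then fail.

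The main obstacle is promoting this countable collection of constraints to a genuine weak-$\ast$ open neighborhood of $z_0'$. This step is the content of the Banach--Dieudonn\'e theorem: the topology $\tau_c$ on $X'$ of uniform convergence on norm-compact subsets of $X$ agrees with the weak-$\ast$ topology on each bounded ball $rB_{X'}$ and induces the same continuous linear functionals, hence the same closed convex sets. Since $(x_n) \to 0$ in norm, the set $\{x_n : n \in \N\} \cup \{0\}$ is compact in $X$, so $U := \{y' \in X' : |\langle x_n, y' - z_0'\rangle| < 1 \text{ for all } n \in \N\}$ is a $\tau_c$-neighborhood of $z_0'$. Combined with the separation engineered above, this gives $U \cap Y = \emptyset$, and the Banach--Dieudonn\'e characterization delivers the weak-$\ast$ closedness of $Y$.
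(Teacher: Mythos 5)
The paper does not prove this theorem; it is cited directly from Megginson, so your attempt must be judged on its own merits. Your overall plan is the right one: the substantive implication is equivalent to the Banach--Dieudonn\'e theorem, and the standard proof of that theorem is exactly the sort of inductive construction of a norm-compact set in $X$ whose polar misses $z_0' + Y$. However, there is a genuine gap in the inductive step.

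The claim that Hahn--Banach separation produces a single $x_n$ which, after rescaling, satisfies both $\|x_n\| < 1/n$ and $\inf_{y' \in K_n} |\langle x_n, y' - z_0' \rangle| > 1$ is false. These two requirements pull in opposite directions: for any $x_n$ and any $y' \in K_n$ one has $1 < |\langle x_n, y' - z_0'\rangle| \le \|x_n\|\,\|y' - z_0'\|$, hence $\|x_n\| > 1/\inf_{K_n}\|y' - z_0'\|$, and there is nothing in your setup forcing that infimum to exceed $n$. In fact your own induction only gives $K_n \cap (n-1)B_{X'} = \emptyset$, so elements of $K_n$ can have $\|y' - z_0'\|$ as small as roughly $n - 1 - \|z_0'\|$, which makes the lower bound on $\|x_n\|$ strictly larger than $1/n$. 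The rescaling you invoke simply trades one constraint for the other and cannot satisfy both. This is why every textbook proof (Megginson Lemma 2.7.5 / Theorem 2.7.11, Conway V.12.6, etc.) builds a \emph{finite set} $A_n \subset X$ at stage $n$, not a single vector: for each $y' \in K_n$ one chooses a separate $x_{y'}$ of small norm that is nearly aligned with $y'$, so that $|\langle x_{y'}, y'\rangle| > 1$ even though $\|x_{y'}\|$ is small, and then weak-$\ast$ compactness of $K_n$ yields a finite subcover. One single separating functional, pointing in one fixed direction, cannot do the work of all these differently-aligned vectors. Making this precise also requires a preliminary normalization -- translating $z_0'$ to $0$ and rescaling so that $(Y - z_0') \cap B_{X'} = \emptyset$, which is possible by a finite-intersection-property argument since each $(Y - z_0') \cap rB_{X'}$ is weak-$\ast$ compact -- so that the inductive hypothesis gives exactly $\|y'\| > n$ on $K_n$ and the norm bookkeeping closes.

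A secondary, structural point: if you are willing to cite Banach--Dieudonn\'e (that $\tau_c$ is the finest topology on $X'$ agreeing with the weak-$\ast$ topology on norm-bounded sets, and that $(X',\tau_c)' = X$), then the hypothesis ``$Y \cap rB_{X'}$ is weak-$\ast$ closed for all $r$'' already says that $Y$ is $\tau_c$-closed, and a $\tau_c$-closed convex set is weak-$\ast$ closed because the two topologies have the same dual. The inductive construction of $(x_n)$ then becomes redundant; it is precisely the content of the proof of Banach--Dieudonn\'e. So either do the construction carefully (with finite sets, as above) and avoid citing Banach--Dieudonn\'e, or cite it and drop the construction; doing both is circular in flavor and obscures where the real work happens.
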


\begin{proof}[Proof of Theorem \ref{very weakly holomorphic harmonic} for general Banach spaces]
Consider the space
\begin{align*}
Y:=\{x'\in X',x'\circ f\textnormal{ is holomorphic (harmonic)}\}.
\end{align*}
Since $W\subset Y$ it follows that $Y$ is weak-$\ast$ dense in $X'$. It remains to show that $Y$ is closed in the weak-$\ast$ topology since then the result follows from Lemma \ref{weakly holomorphic harmonic}. By the Krein-\v{S}mulyan theorem it suffices to show that $Y\cap B_{X'}$ is weakly-$\ast$ closed for every $r>0$. Let $\{x_i'\}_{i\in I}$ be a net in $Y\cap B_{X'}$ such that $x_i'\rightharpoonup^*x'\in B_{X'}$. The net formed by the functions $f_i:=x_i'\circ f$ is locally $L^1$-bounded and converges pointwise to $x'\circ f$. By Proposition \ref{pointwise limit} it follows that $x'\circ f$ is holomorphic (harmonic) and hence $x'\in Y$.
\end{proof}

Vitali's convergence theorem is usually stated for bounded sequences of holomorphic functions. We apply our results to show that it also holds for locally $L^1$-bounded sequences. Let $\Omega$ be an open and connected set in $\C$ (or $\R^d$). A subset $N\subset\Omega$ is called a \emph{set of uniqueness for holomorphic (harmonic) functions} if every holomorphic (harmonic) function which vanishes on $N$ also vanishes on $\Omega$. It is well known that any infinite set contained in a compact subset of$\Omega$ is a set of uniqueness for holomorphic functions. This does not hold for harmonic functions. On the other hand, if the closure of $N\subset\Omega$ has non-empty interior, then $N$ is a set of uniqueness for harmonic functions.

\begin{theorem}[Vitali]
Let $X$ be a Banach space and let $f_n$ be a locally $L^1$-bounded sequence of $X$-valued holomorphic (harmonic) functions. Suppose that $N\subset\Omega$ is a set of uniqueness for holomorphic (harmonic) functions such that $\lim_{n\rightarrow\infty}f_n$ exists pointwise on $N$. Then $\lim_{n\rightarrow\infty}f_n$ exists uniformly on compact sets and defines a holomorphic (harmonic) function.
\end{theorem}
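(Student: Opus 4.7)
The plan is to reduce the theorem to Proposition~\ref{pointwise limit} by showing pointwise norm convergence of $\{f_n\}$ on all of $\Omega$. As a preliminary step, the Cauchy/Poisson integral estimates from the proof of Proposition~\ref{pointwise limit} show that $\{f_n\}$ is equi-Lipschitz on compact subsets of $\Omega$; since $\{f_n(z_0)\}$ is bounded for any $z_0 \in N$ (as it converges), this propagates by the equi-Lipschitz property and connectedness of $\Omega$ to local uniform boundedness throughout $\Omega$.

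I would then construct a candidate holomorphic (harmonic) limit $F : \Omega \to X$ as follows. For each $x' \in X'$, apply the classical scalar Vitali theorem to the locally uniformly bounded scalar sequence $\langle f_n, x' \rangle$, which converges on the uniqueness set $N$; this yields $\langle f_n, x'\rangle \to g_{x'}$ locally uniformly for some scalar holomorphic (harmonic) $g_{x'}$. Define $F(z) \in X^{**}$ by $\langle F(z), x'\rangle := g_{x'}(z)$. Then $F$ is locally bounded and very weakly holomorphic (harmonic) for the separating subset $X' \subset X'''$ (which separates $X^{**}$, since any $\xi \in X^{**}$ is determined by its action on $X'$), so Theorem~\ref{very weakly holomorphic harmonic} applied in $X^{**}$ gives that $F$ is holomorphic (harmonic) into $X^{**}$. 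Since $F|_N$ takes values in $X$, composing with the quotient $\pi : X^{**} \to X^{**}/X$ produces a holomorphic (harmonic) $X^{**}/X$-valued function vanishing on $N$; because a set of uniqueness for scalar holomorphic (harmonic) functions is also one for Banach-valued such functions (via Hahn--Banach applied to the dual of $X^{**}/X$), $\pi \circ F \equiv 0$ and hence $F(z) \in X$ for all $z \in \Omega$.

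The main obstacle is to upgrade this to norm pointwise convergence $h_n := f_n - F \to 0$ on $\Omega$. For the holomorphic case I would fix an accumulation point $z_0 \in \Omega$ of $N$ and a sequence $z_k \in N$ with $z_k \to z_0$, then consider the Taylor expansion $h_n(z) = \sum_{j \geq 0} a_{n,j}(z-z_0)^j$ on a ball $B(z_0, R) \ssubset \Omega$ with uniform Cauchy bound $\|a_{n,j}\| \leq M R^{-j}$. Equi-Lipschitz propagation from $z_k$ yields $a_{n,0} = h_n(z_0) \to 0$; induction on $j$ then shows $a_{n,j} \to 0$ in norm for every $j$, the inductive step using the vanishing of $h_n(z_k)$ and of the previously handled lower coefficients, together with the uniformly summable tail bound $\sum_{i>j} \|a_{n,i}\||z_k - z_0|^{i-j} \to 0$ uniformly in $n$ as $k \to \infty$, to justify exchanging $\lim_n$ and $\lim_k$. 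Dominated convergence on the Taylor series then yields $\|h_n(z)\| \to 0$ for $|z-z_0|<R$. For the harmonic case, Lipschitz propagation from $N$ first extends norm convergence to $\bar N$, which has non-empty interior (the standard sufficient condition for uniqueness noted in the paper), and equicontinuity gives uniform norm convergence of $h_n \to 0$ on some ball $B(z_0, r) \subset \Omega$. In either case, iterated analytic expansions combined with dominated convergence propagate the norm convergence from the initial ball to $B(z_0, \dist(z_0, \partial\Omega))$, and chaining along a finite sequence of balls covering a path in the connected open set $\Omega$ extends the norm convergence to all of $\Omega$. Proposition~\ref{pointwise limit} then converts pointwise norm convergence into uniform convergence on compact subsets, completing the proof.
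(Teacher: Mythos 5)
Your proposal is correct in spirit for the holomorphic case, but it is considerably more elaborate than the paper's argument and has a genuine gap in the harmonic case. The paper's proof is a two-line reduction: it packages the whole sequence into the single function
\[
F:\Omega\to\ell^\infty(\N,X),\qquad F(z)=(f_n(z))_{n\in\N},
\]
checks via Theorem~\ref{very weakly holomorphic harmonic} that $F$ is holomorphic (harmonic) since it is locally $L^1$-bounded and the functionals $\delta_n\otimes x'$ form a separating family, and then composes with the quotient map $q:\ell^\infty(\N,X)\to\ell^\infty(\N,X)/c(\N,X)$. Since $q\circ F$ vanishes on the uniqueness set $N$, it vanishes everywhere, so $(f_n(z))_n$ converges in norm for every $z$, and Proposition~\ref{pointwise limit} finishes. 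Your approach instead constructs the candidate limit $F(z)\in X^{**}$ by scalar Vitali, pushes to $X^{**}/X$ to land in $X$, and then upgrades weak-$\ast$ convergence to norm convergence by hand through Taylor coefficient estimates and equi-Lipschitz propagation. The bidual-plus-quotient step you use is structurally similar to the paper's quotient argument but is applied to a different space; the subsequent norm-upgrade argument is entirely avoided in the paper because the $\ell^\infty/c$ trick simultaneously produces the limit and guarantees norm convergence.

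The concrete gap: in the harmonic case you write that ``Lipschitz propagation from $N$ first extends norm convergence to $\bar N$, which has non-empty interior (the standard sufficient condition for uniqueness noted in the paper).'' But the theorem only assumes that $N$ is a \emph{set of uniqueness for harmonic functions}; nonempty interior of $\bar N$ is sufficient but not necessary, and the paper explicitly lists it only as a sufficient condition. Since your norm-upgrade argument for the harmonic case is built entirely on having an interior ball where norm convergence already holds, it does not cover general uniqueness sets. There is no analogous obstruction in the paper's proof, which needs only that a Banach-valued harmonic function vanishing on $N$ must vanish identically (and this follows from scalar uniqueness by testing with functionals). To repair your argument you would need a harmonic analogue of the Taylor-coefficient bootstrap working directly from an arbitrary uniqueness set, which is a nontrivial additional step you have not supplied.
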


\begin{proof}
The function
\begin{align*}
F:\Omega&\rightarrow\ell^\infty(\N,X)\\
z&\mapsto (f_n(z))_{n\in\N}
\end{align*}
is holomorphic (harmonic) by Theorem \ref{very weakly holomorphic harmonic}. Let $c(\N,X)\subset\ell^\infty(\N,X)$ be the closed subspace of all convergent sequences and denote by $q$ the quotient map $\ell^\infty(\N,X)\rightarrow\ell^\infty(\N,X)\slash c(\N,X)$. Then $q\circ F$ is holomorphic (harmonic) and vanishes on $N$. Since $N$ is a set of uniqueness we have $q\circ F=0$, that is, $F(z)$ is convergent for every $z\in\Omega$. The claim now follows from Proposition \ref{pointwise limit}.
\end{proof}


\section{Newtonian Potentials}

With this section we start the second part of this paper on elliptic $L^p$ theory in Banach spaces. Our results about harmonic functions from Section~\ref{general proof} will play a role in Section \ref{strong solutions section}. 
In the remainder of the paper $X$ denotes a real Banach space.
We recall some facts about the Newtonian potential which can be proved analogously to the real-valued case, see \cite[Section 4.2]{GilbargTrudingerEllipticpde} and \cite[Chapter II, \S3]{DautrayLionsMathematicalAnalysis}. The \emph{fundamental solution} for the Laplace equation is given by
\begin{align*}
\R^d\backslash\{0\}\ni\xi\mapsto\Phi(\xi)=
\begin{cases}
\frac{1}{2\pi}\log|\xi|,&\textnormal{ if }d=2\\
\frac{1}{d(2-d)\lambda(B(0,1))}|\xi|^{2-d},&\textnormal{ if }d>2.
\end{cases}
\end{align*}

For $f\in L^1(\R^d,X)$ with compact support we define the \emph{Newtonian potential} of $f$ via
\begin{align*}
\Phi\ast f.
\end{align*}
The Newtonian potential of $f$ is an element of $L^1_\textnormal{loc}(\R^d,X)$ and satisfies Poisson's equation $\Delta(\Phi\ast f)=f$ in $\mathcal{D}'(\R^d,X)$. Furthermore, if $f$ is compactly supported and H\"older continuous, the Newtonian potential of $f$ is in $C^2(\R^d,X)$ and satisfies $\Delta(\Phi\ast f)=f$ in the classical sense, cf. \cite[Section 4.2]{GilbargTrudingerEllipticpde}.\\

In this section we will show that certain classical $L^p$ estimates for the Newtonian potential on domains imply the UMD property of $X$. For an overview concerning the UMD property we refer the reader to \cite[Chapter 5]{HytonenetalAnalysisinBanachspaces}. The base for our results is the following multiplier theorem. We denote by $\mathfrak{M}L^p(\R^d,X)$ the space of all scalar-valued $L^p(\R^d,X)$ multipliers, see \cite[Definition 5.3.3]{HytonenetalAnalysisinBanachspaces}.

\begin{theorem}[{\cite[Theorem 3.1]{GeissSingularIntegral}}]\label{geissmultiplier}
Let $d\geq 2$ and let $X$ be a Banach space. Let $m\in C^\infty(\R^d\backslash\{0\},\R)$ be even, not constant and \emph{$0$-homogeneous}\index{homogeneous function}, that is,
\begin{align*}
m(\lambda\xi)=m(\xi)
\end{align*}
for all $\xi\in\R^d\backslash\{0\}$ and $\lambda>0$. Suppose that $m\in\mathfrak{M}L^p(\R^d,X)$ for some $1<p<\infty$. Then $X$ has the UMD property
\end{theorem}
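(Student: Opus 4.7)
The target is a deep theorem of Geiss--Montgomery--Saksman; I sketch the strategy I would pursue. The fundamental tool is the characterization due to Bourgain and Burkholder that $X$ has UMD if and only if the Hilbert transform is bounded on $L^p(\R, X)$, which in $\R^d$ for $d \geq 2$ is equivalent to the $L^p$-boundedness of the Riesz transforms $R_j$ with symbols $-i\xi_j/|\xi|$. The key obstacle is a parity mismatch: the Riesz transform symbols are \emph{odd} $0$-homogeneous, while the hypothesis gives only an \emph{even} $0$-homogeneous multiplier. Converting even to odd, with no loss in the $L^p(\R^d,X)$-multiplier norm, is the heart of the argument.

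First I would reduce the dimension. By a de Leeuw-type restriction principle for vector-valued multipliers, the restriction of $m$ to any linear $2$-plane $V \subset \R^d$ through the origin is still a bounded $L^p(V,X)$-multiplier with the same norm, and remains smooth, even, $0$-homogeneous. Since $m$ is not constant on $\R^d \setminus \{0\}$, one can choose $V$ so the restriction is not constant. Thus it suffices to treat $d = 2$.

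Next I would pass to Fourier series on the circle. Writing $m(\cos\theta,\sin\theta) = \sum_{k \in \Z} c_{2k} e^{2ik\theta}$ (only even frequencies by the evenness hypothesis, rapidly decaying by smoothness), the non-constancy forces some $c_{2k_0} \neq 0$ with $k_0 \neq 0$. Exploiting rotation invariance of the ambient Euclidean space, $T_m$ commutes with rotations, and rotations act on each angular frequency $e^{2ik\theta}$ by a character. Averaging $T_m$ against these characters isolates a single angular component and yields, with the same multiplier bound up to a constant, the higher-order Riesz multiplier with symbol $e^{\pm 2ik_0\theta}$; these are the $(2k_0)$-th order spherical Riesz transforms on $\R^2$.

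The remaining, and genuinely hard, step is to convert a bounded higher Riesz transform of nonzero \emph{even} order into the boundedness of a first-order (odd) Riesz transform, which then delivers UMD via Bourgain's theorem. I would attack this by the square-function / Littlewood--Paley characterization of UMD (Bourgain, Rubio de Francia): the $L^p(X)$-boundedness of sufficiently many angular projections lets one reconstruct the boundedness of an odd $0$-homogeneous multiplier by polarization, combined with a Plancherel--P\'olya-type iteration that plays even harmonics against each other to produce an odd one. This final even-to-odd passage is the main obstacle and the reason the scalar-valued proof techniques do not transplant directly; everything preceding it is the now-standard mix of restriction, Fourier expansion on the sphere, and transference, but the last step forces one to use the vector-valued square-function machinery as in \cite{GeissSingularIntegral}.
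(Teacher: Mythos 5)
Note first that the paper does not prove this statement: Theorem~\ref{geissmultiplier} is an external citation to Geiss, Montgomery-Smith and Saksman \cite{GeissSingularIntegral}, so the relevant comparison is to their argument, not to anything in the present text. Your preliminary reductions (restricting $m$ to a $2$-plane via a vector-valued de Leeuw theorem, expanding $m$ restricted to $S^1$ in a Fourier series with only even frequencies, and averaging over rotations to isolate a single even harmonic, i.e.\ a second-order Riesz transform) do match the standard opening moves of \cite{GeissSingularIntegral}, and you correctly flag the last step as the crux.

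However, the last step of your plan cannot work in the form you describe, and this is not merely a difficulty to be overcome by ``square-function machinery'' or ``polarization.'' No operation that is available on Fourier multipliers --- products (composition of the operators), sums, limits, dilations, rotations, or restriction to subspaces --- can produce an odd $0$-homogeneous symbol from even ones: the even symbols form a subalgebra closed under all of these operations, so the class you would reach is again even, and in particular you can never reach the Riesz transforms $-i\xi_j/|\xi|$ this way. Consequently Bourgain's Hilbert-transform criterion is unreachable along this route, and the ``even harmonics against each other produce an odd one'' step has no content. The actual argument of \cite{GeissSingularIntegral} sidesteps the parity obstruction entirely by going straight from the second-order Riesz transforms to the martingale-difference definition of UMD: they use the Gundy--Varopoulos / Nazarov--Volberg stochastic representation, in which $R_jR_k$ arises as a conditional expectation (projection) of a martingale transform built from the heat extension of $f$ composed with Brownian motion. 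A duality and conditional-expectation argument then shows that a bound on this \emph{projection} already forces a bound on the underlying martingale transforms, which is UMD. If you want to reconstruct the proof, this probabilistic representation is the missing idea; any attempt to manufacture an odd multiplier from the given even one will fail for the structural reason above.
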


\begin{corollary}\label{geisscorollary}
Let $j,k\in\{1,\ldots,d\}$. If the second-order Riesz transform $R_jR_k$ (associated with the multiplier $-\frac{\xi_i\xi_j}{|\xi|^2}$) is bounded in $L^p(\R^d,X)$ for some $1<p<\infty$, then $X$ has the UMD property.
\end{corollary}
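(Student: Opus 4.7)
The plan is to apply Theorem \ref{geissmultiplier} directly, since the corollary is essentially a specialization of that theorem to the specific multiplier symbol of a second-order Riesz transform. First I would identify the symbol $m(\xi) = -\xi_j\xi_k/|\xi|^2$ and verify that it meets every hypothesis of Theorem \ref{geissmultiplier}. Smoothness on $\R^d\setminus\{0\}$ is immediate; the function is even since the sign flip in both coordinates cancels; and it is $0$-homogeneous because numerator and denominator are each $2$-homogeneous. The hypothesis $R_jR_k$ bounded on $L^p(\R^d,X)$ translates directly to the statement $m\in\mathfrak{M}L^p(\R^d,X)$, via the definition cited from \cite{HytonenetalAnalysisinBanachspaces}.

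The one point that needs a brief verification is the non-constancy of $m$, and this is where the restriction $d\geq 2$ (implicit in the surrounding discussion) becomes essential. For $j=k$ one has $m(\xi) = -\xi_j^2/|\xi|^2$, which takes the value $0$ on coordinate axes transverse to the $j$-th axis and the value $-1$ on the $j$-th axis itself; for $j\neq k$ the symbol takes values of opposite sign on, say, $(\xi_j,\xi_k) = (1,1)$ and $(1,-1)$ (padding with zeros in the remaining coordinates when $d>2$). Either way, $m$ is not constant as soon as $d\geq 2$.

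With all hypotheses of Theorem \ref{geissmultiplier} verified, the conclusion that $X$ has the UMD property follows immediately. No real obstacle is expected here; the only thing one might pause over is matching the formal definition of $\mathfrak{M}L^p(\R^d,X)$ with the informal statement that $R_jR_k$ is ``bounded in $L^p(\R^d,X)$'', but this is exactly the content of \cite[Definition~5.3.3]{HytonenetalAnalysisinBanachspaces}.
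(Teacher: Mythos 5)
Your proposal is correct and takes the only natural route, which is evidently what the paper intends: the paper states the corollary without proof as an immediate specialization of Theorem~\ref{geissmultiplier} to the symbol $-\xi_j\xi_k/|\xi|^2$. Your verification of the hypotheses (smoothness, evenness, $0$-homogeneity, non-constancy for $d\geq 2$, and the translation of boundedness into $m\in\mathfrak{M}L^p(\R^d,X)$) is exactly what is being left to the reader.
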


Using this corollary we may prove the main result of this section -- a further characterization of the UMD property which will be useful in the next section.

\begin{theorem}\label{necessity on domains}
Let $d\geq2$, $\Omega\subset\R^d$ be open and non-empty and let $1<p<\infty$. Suppose that there exists a constant $C>0$ and $j,k\in\{1,\ldots,d\}$ such that the estimate
\begin{align*}
\|D_{jk}(\Phi\ast f)\|_{L^p(\Omega,X)}\leq C\|f\|_{L^p(\Omega,X)}
\end{align*}
holds for all $f\in\Cci(\Omega,X)$. Then $X$ has the UMD property.
\end{theorem}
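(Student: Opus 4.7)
The plan is to reduce to Corollary~\ref{geisscorollary} by promoting the assumed local estimate on $\Omega$ to the global estimate
\[\|D_{jk}(\Phi\ast g)\|_{L^p(\R^d,X)}\leq C\|g\|_{L^p(\R^d,X)}\qquad(g\in\Cci(\R^d,X)),\]
and then identifying $g\mapsto D_{jk}(\Phi\ast g)$, up to a sign, with the Riesz transform $R_jR_k$. The crux of the argument is that $D_{jk}(\Phi\ast\cdot)$ is invariant under dilations, so the estimate on any small ball inside $\Omega$ can be blown up to all of $\R^d$.

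First I would fix $x_0\in\Omega$ and $r_0>0$ with $\overline{B(x_0,r_0)}\subset\Omega$. Given $g\in\Cci(\R^d,X)$ with $\supp g\subset B(0,R)$ and $0<r<r_0/R$, the scaled function $f_r(x):=g((x-x_0)/r)$ lies in $\Cci(\Omega,X)$. A direct substitution $y=x_0+rz$ in the convolution yields
\[(\Phi\ast f_r)(x)=r^2(\Phi\ast g)\bigl((x-x_0)/r\bigr)+c_r,\]
where $c_r$ is the $x$-independent term equal to $0$ when $d>2$ and to $\tfrac{r^2\log r}{2\pi}\int_{\R^d}g(z)\,dz$ when $d=2$; in either case it is annihilated by $D_{jk}$, so by the chain rule
\[D_{jk}(\Phi\ast f_r)(x)=D_{jk}(\Phi\ast g)\bigl((x-x_0)/r\bigr).\]
Applying the hypothesis to $f_r$ and substituting $u=(x-x_0)/r$ on both sides (each side acquires a factor $r^{d/p}$ that cancels) produces
\[\|D_{jk}(\Phi\ast g)\|_{L^p(\Omega_r^\ast,X)}\leq C\|g\|_{L^p(\R^d,X)},\qquad\Omega_r^\ast:=(\Omega-x_0)/r.\]
Since $x_0$ is interior to $\Omega$, the dilated domains $\Omega_r^\ast$ exhaust $\R^d$ as $r\to 0$, so monotone convergence upgrades this bound to the desired estimate on all of $\Cci(\R^d,X)$.

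By density, the map $g\mapsto D_{jk}(\Phi\ast g)$ extends to a bounded operator on $L^p(\R^d,X)$. A Fourier multiplier computation (using $\Delta\Phi=\delta_0$) shows that this operator acts on Schwartz functions with symbol $\xi_j\xi_k/|\xi|^2$, which is the negative of the symbol of $R_jR_k$ recorded in Corollary~\ref{geisscorollary}. Hence $R_jR_k$ itself is bounded on $L^p(\R^d,X)$, and Corollary~\ref{geisscorollary} delivers the UMD property of $X$. The step requiring the most care is the scaling identity in dimension $d=2$, where the logarithmic fundamental solution produces the extra term $\log r$; this is harmless because it contributes only an $x$-independent constant to $\Phi\ast f_r$ and is therefore wiped out by the second derivative. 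The remaining steps -- change of variables, the monotone-convergence passage to $\R^d$, and the multiplier identification -- are routine once this identity is in hand.
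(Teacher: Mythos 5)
Your proposal is correct and takes essentially the same approach as the paper: both exploit the dilation invariance of $g\mapsto D_{jk}(\Phi\ast g)$ (with the harmless $x$-independent logarithmic correction when $d=2$), scale the hypothesis on $\Omega$ up to a global $L^p(\R^d,X)$ bound, identify the resulting operator with the second-order Riesz transform, and invoke Corollary~\ref{geisscorollary}. The only cosmetic difference is that you translate/contract into a ball $B(x_0,r_0)\subset\Omega$ and send $r\to 0$, whereas the paper normalizes $\Omega\supset(-1,1)^d$ and sends the dilation parameter $\lambda\to\infty$.
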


\begin{proof}
Since $\Phi\in L^1_\textnormal{loc}(\R^d,\R)$, one has $\Phi\ast f\in C^\infty(\R^d,X)$ for all $f\in\Cci(\R^d,X)$. Moreover
\begin{align*}
\Delta(\Phi\ast f)=f
\end{align*}
in the classical sense. For $f:\R^d\rightarrow X$ and $\lambda>0$ define the dilation
\begin{align*}
f_\lambda(x):=f(\lambda x)
\end{align*}
of $f$. Consider the operator $T:\Cci(\R^d,X)\rightarrow C^\infty(\R^d,X)$ given by
\begin{align*}
Tf=D_{jk}(\Phi\ast f).
\end{align*}
It is remarkable that $T$ commutes with dilation, that is,
\begin{align*}
(Tf)_\lambda=Tf_\lambda
\end{align*}
for all $f\in\Cci(\R^d,X)$. To see this we first note that
\begin{align*}
\Phi(\lambda^{-1}\cdot)=\lambda^{d-2}\Phi(\cdot)+c_d(\lambda)
\end{align*}
where $c_d(\lambda)=0$ if $d>2$ and $c_2(\lambda)$ is a constant. Consequently, for $f\in\Cci(\R^d,X),\lambda>0$ and $\xi\in\R^d$ we have
\begin{align*}
(\Phi\ast f_\lambda)(\xi)&=\int{\Phi(\xi-\eta)f(\lambda\eta)\,d\eta}\\
&=\lambda^{-d}\int{\Phi\left(\xi-\frac{\omega}{\lambda}\right)f(\omega)\,d\omega}\\
&=\lambda^{-d}\int{\Phi(\lambda^{-1}(\lambda\xi-\omega)f(\omega)\,d\omega}\\
&=\lambda^{-2}\int{\Phi(\lambda\xi-\omega)f(\omega)\,d\omega}+\lambda^{-d}c_d(\lambda)\int{f(\omega)\,d\omega}\\
&=\lambda^{-2}(\Phi\ast f)(\lambda\xi)+\lambda^{-d}c_d(\lambda)\int{f(\omega)\,d\omega}.
\end{align*}
Consequently, since the second term does not depend on $\xi$,
\begin{align}\label{Tlambdacommute}
D_{jk}(\Phi\ast f_\lambda)=(D_{jk}(\Phi\ast f))_\lambda.
\end{align}
Next we note that for each measurable function $g:\R^d\rightarrow X$ and $\lambda>0$  we have
\begin{align}\label{changeofvariables}
\lambda^\frac{d}{p}\|g_\lambda\|_{L^p(\R^d,X)}=\|g\|_{L^p(\R^d,X)}.
\end{align}
Using Corollary \ref{geisscorollary} it remains to show that $R_jR_k$ is bounded on $L^p(\R^d,X)$. Since $\Omega$ is open we may assume that it contains $(-1,1)^d$. Let $f\in\Cci(\R^d,X)$. Since $\Delta(\Phi\ast f)=f$ it follows that $R_jR_kf=D_{jk}(\Phi\ast f)=Tf$. Choose $\lambda>0$ such that $\supp f\subset(-\lambda,\lambda)^d$. Thus $f_\lambda\in\Cci(\Omega,X)$ and we have $R_jR_kf=D_{jk}(\Phi\ast f)$ by \eqref{Tlambdacommute}. Using \eqref{Tlambdacommute} and \eqref{changeofvariables} as well as the assumption we obtain
\begin{align*}
\|R_jR_kf\|_{L^p((-\lambda,\lambda)^d,X)}&=\lambda^\frac{d}{p}\|(R_jR_kf)_\lambda\|_{L^p((-1,1)^d,X)}\\
&\leq\lambda^\frac{d}{p}\|(D_{jk}(\Phi\ast f))_\lambda\|_{L^p(\Omega,X)}\\
&=\lambda^\frac{d}{p}\|D_{jk}(\Phi\ast f_\lambda)\|_{L^p(\Omega,X)}\\
&\leq\lambda^\frac{d}{p}C\|f_\lambda\|_{L^p(\Omega,X)}\\
&=C\|f\|_{L^p(\R^d,X)}.
\end{align*}
Letting $\lambda\rightarrow\infty$, this shows that $R_jR_k$ is bounded.
\end{proof}


\section{The Domain of the Laplacian on $L^p(\R^d,X)$}

Let $X$ be a Banach space and $1<p<\infty$. The operator $\Delta_p$ is defined as the distributional Laplacian with maximal domain in $L^p(\R^d,X)$, that is,
\begin{align*}
D(\Delta_p)&:=\{f\in L^p(\R^d,X),\Delta f\in L^p(\R^d,X)\}\\
\Delta_pf&:=\Delta f.
\end{align*}

It is not difficult to see that $\Delta_p$ is the generator of the Gaussian semigroup, see Proposition \ref{Gaussian semigroup} below. If $X$ has the UMD property, the following estimate is known.

\begin{proposition}[{\cite[Proposition 5.5.4]{HytonenetalAnalysisinBanachspaces}}]\label{Lp estimates Laplacian}
Let $X$ be a Banach space that has the UMD property and let $1\leq p<\infty$. Suppose that $u\in L^p(\R^d,X)$ with $\Delta u\in L^p(\R^d,X)$. Then for $j,k\in\{1,\ldots,d\}$ we have $D_{jk} u\in L^p(\R^d,X)$ and there exists a constant $C\geq0$ such that
\begin{align*}
\|D_{jk}u\|_{L^p(\R^d,X)}\leq C\|\Delta u\|_{L^p(\R^d,X)}.
\end{align*}
In fact: $D_{jk} u$ is given by the \emph{second-order Riesz transform} $R_jR_k\Delta u$.
\end{proposition}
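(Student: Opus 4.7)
The plan is to identify $D_{jk}u$ with the second-order Riesz transform $R_jR_k\Delta u$ as elements of $L^p(\R^d,X)$, so that the inequality follows from the boundedness of $R_jR_k$ on $L^p(\R^d,X)$. The UMD hypothesis is used exactly here: each Riesz transform $R_j$ is a bounded Fourier multiplier on $L^p(\R^d,X)$ for $1<p<\infty$ whenever $X$ is UMD (a standard consequence of the UMD property; see \cite[Chapter 5]{HytonenetalAnalysisinBanachspaces}), hence so is $R_jR_k$. Denote its norm by $C$ and set $v:=R_jR_k(\Delta u)\in L^p(\R^d,X)$; then $\|v\|_{L^p(\R^d,X)}\le C\|\Delta u\|_{L^p(\R^d,X)}$. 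It remains to show $v=D_{jk}u$ in $\mathcal{D}'(\R^d,X)$.

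To verify this distributional identity, I would test against a scalar $\varphi\in\Cci(\R^d,\R)$ and transport operators across the pairing via the chain
\begin{align*}
\int_{\R^d} v\,\varphi\,dx
\;=\;\int_{\R^d}\Delta u\cdot R_jR_k\varphi\,dx
\;=\;\int_{\R^d}u\cdot\Delta(R_jR_k\varphi)\,dx
\;=\;\int_{\R^d}u\cdot D_{jk}\varphi\,dx,
\end{align*}
whose right-hand side equals $\langle D_{jk}u,\varphi\rangle$ by definition. The first equality is the formal self-adjointness of $R_jR_k$, whose Fourier symbol is real and even; the second is the defining property of the distributional Laplacian applied to $u$; and the third is the scalar Fourier-multiplier identity $\Delta(R_jR_k\varphi)=D_{jk}\varphi$ (up to the sign convention used for $R_j$ in the paper), valid on the Schwartz function $\varphi$. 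Since scalar test functions determine $X$-valued distributions, this yields $D_{jk}u=v$, and with it the desired bound.

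The subtle step is the middle equality: a priori the identity $\langle\Delta u,\psi\rangle=\langle u,\Delta\psi\rangle$ is only available for $\psi\in\Cci$, whereas $\psi:=R_jR_k\varphi$ is merely smooth with polynomial decay. Both $\psi$ and $\Delta\psi$ lie in $L^{p'}(\R^d,\R)$ (by scalar Calder\'on--Zygmund bounds applied to $\varphi$ and $D_{jk}\varphi$), so the pairings $\int u\cdot\Delta\psi\,dx$ and $\int\Delta u\cdot\psi\,dx$ are well-defined; one then approximates $\psi$ by compactly supported mollifications $\chi_R(\rho_\varepsilon\ast\psi)$ converging to $\psi$ in $W^{2,p'}(\R^d,\R)$ and passes to the limit using $u,\Delta u\in L^p(\R^d,X)$. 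This density argument is the only real obstacle; the remaining steps amount to routine Fourier multiplier bookkeeping.
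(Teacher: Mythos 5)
The paper does not prove Proposition~\ref{Lp estimates Laplacian} itself: it is cited verbatim from Hyt\"onen--van Neerven--Veraar--Weis~\cite[Prop.~5.5.4]{HytonenetalAnalysisinBanachspaces}, so there is no in-paper proof to compare against. Evaluating your argument on its own merits, it is correct in substance for $1<p<\infty$ and follows the expected route: reduce to boundedness of $R_jR_k$ on $L^p(\R^d,X)$ (the UMD hypothesis enters exactly here) and then establish $D_{jk}u = \pm R_jR_k\Delta u$ as distributions. The one point of genuine divergence is how you verify that identity. The reference --- and the paper's own later argument in Proposition~\ref{UMD implies domain} --- does it by approximating $u$: Lemma~\ref{density graph} says $\Cci(\R^d,X)$ is a core for $\Delta_p$, so one picks $\varphi_n\to u$ with $\Delta\varphi_n\to\Delta u$ in $L^p$, uses the Fourier identity for test functions, and passes to the limit using continuity of $R_jR_k$. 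You instead keep $u$ fixed and approximate the test function $\psi=R_jR_k\varphi$, which forces you to control the lack of compact support. Your sketch of that step is believable but thinner than it looks: you must show $\chi_R(\rho_\varepsilon\ast\psi)\to\psi$ in $W^{2,p'}$, for which the cutoff cross-terms $(\Delta\chi_R)\psi + 2\nabla\chi_R\cdot\nabla\psi$ must vanish in $L^{p'}$; this requires the Calder\'on--Zygmund kernel decay $\psi=O(|x|^{-d})$, $\nabla\psi=O(|x|^{-d-1})$ for compactly supported $\varphi$, which you invoke only implicitly. The core-of-$\Delta_p$ route avoids all of this, and the paper already has the lemma in hand, so it is the more economical argument in context.

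Two small observations. First, the proposition as printed in the paper says $1\le p<\infty$; this must be a misprint for $1<p<\infty$ (the Riesz transforms fail already on $L^1(\R^d,\R)$, and every subsequent use in the paper restricts to $1<p<\infty$). You silently and correctly worked in the range $1<p<\infty$. Second, the duality step $\int(R_jR_k\Delta u)\varphi = \int\Delta u\,(R_jR_k\varphi)$ deserves one line of justification: it follows from the symbol being real and even plus density of $X$-valued Schwartz functions in $L^p(\R^d,X)$, since the identity is clear for Schwartz $f$ and both sides are continuous in $f\in L^p(\R^d,X)$. You assert it but do not quite say why it extends from the test-function case.
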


Using this we now show

\begin{proposition}\label{UMD implies domain}
Let $X$ be a Banach space which has the UMD property and let $1<p<\infty$. Then
\begin{align*}
D(\Delta_p)=W^{2,p}(\R^d,X).
\end{align*}
\end{proposition}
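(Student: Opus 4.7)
The inclusion $W^{2,p}(\R^d,X)\subset D(\Delta_p)$ is one line: for $u\in W^{2,p}(\R^d,X)$ the weak Laplacian $\Delta u=\sum_{j=1}^{d}D_{jj}u$ is a finite sum of $L^p$ functions, so $u\in D(\Delta_p)$. I would dispatch this direction immediately.

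For the harder inclusion $D(\Delta_p)\subset W^{2,p}(\R^d,X)$, take $u\in D(\Delta_p)$, i.e.\ $u,\Delta u\in L^p(\R^d,X)$. Proposition \ref{Lp estimates Laplacian} already supplies the second-order part of the Sobolev regularity: every mixed partial $D_{jk}u$ lies in $L^p(\R^d,X)$, with control by $\|\Delta u\|_{L^p}$. Hence the content of the proposition reduces to showing that the first-order partial derivatives $D_j u$ also belong to $L^p(\R^d,X)$.

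The natural route in the UMD setting is a Fourier multiplier argument. The symbol
\begin{align*}
m_j(\xi):=\frac{i\xi_j}{1+|\xi|^2}
\end{align*}
is smooth on $\R^d\setminus\{0\}$ and satisfies the Mihlin condition $|\xi|^{|\alpha|}|\partial^\alpha m_j(\xi)|\leq C_\alpha$, so in a UMD space the associated Fourier multiplier operator $T_{m_j}$ is bounded on $L^p(\R^d,X)$. An inspection of symbols shows that on Schwartz functions $D_j=T_{m_j}\circ(I-\Delta)$, and applying this with $g:=u-\Delta u\in L^p(\R^d,X)$, together with a density/approximation argument, gives $D_j u=T_{m_j}g\in L^p(\R^d,X)$. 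A more elementary alternative that avoids quoting the vector-valued Mihlin theorem is the one-dimensional Gagliardo--Nirenberg interpolation $\|v'\|_p\leq C\|v\|_p^{1/2}\|v''\|_p^{1/2}$, valid for $X$-valued functions of one real variable: Fubini ensures that $u$ and $D_{jj}u$ restrict to $L^p$ functions of one variable on almost every line parallel to the $j$-th coordinate axis, the interpolation inequality then gives control of $D_j u$ on each such line, and integration in the remaining $d-1$ coordinates recovers $D_j u\in L^p(\R^d,X)$.

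The main subtlety, rather than a deep obstacle, is the identification of the distributional derivative $D_j u$ with the output of the multiplier operator (or with the line-by-line classical derivative used in the Gagliardo--Nirenberg route). This is handled by a standard mollification argument: mollify $u$, apply the estimates to the smooth approximants, and pass to the limit using the continuity of $T_{m_j}$ and the already-established $L^p$ bounds. Once this routine identification is in place, combining the first-order bound with Proposition \ref{Lp estimates Laplacian} yields $u\in W^{2,p}(\R^d,X)$ and completes the proof.
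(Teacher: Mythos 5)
Your proposal is correct, but it organizes the hard inclusion differently from the paper. The paper does not work with $u\in D(\Delta_p)$ directly: it invokes Lemma~\ref{density graph} ($\Cci(\R^d,X)$ is a core for $\Delta_p$), picks $\varphi_n\in\Cci$ with $\varphi_n\to u$ and $\Delta\varphi_n\to\Delta u$ in $L^p$, and proves the a priori bound $\|\varphi_n\|_{W^{2,p}}\leq C(\|\varphi_n\|_{L^p}+\|\Delta\varphi_n\|_{L^p})$ by combining Proposition~\ref{Lp estimates Laplacian} (second derivatives) with Lemma~\ref{interpolation} (first derivatives); the sequence is then Cauchy in $W^{2,p}$, so $u\in W^{2,p}$. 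This structure buys exactly what you flag as the ``main subtlety'': since the estimate is only ever applied to test functions, no identification of distributional derivatives with multiplier outputs or line-by-line derivatives is needed. Your two routes for the first-order term are both viable: the Mihlin multiplier argument with $m_j(\xi)=i\xi_j/(1+|\xi|^2)$ is genuinely different from the paper and is self-contained modulo the vector-valued Marcinkiewicz--Mihlin theorem for scalar symbols in UMD spaces (the UMD hypothesis is used a second time here, which the paper avoids); the line-by-line Gagliardo--Nirenberg alternative is, on inspection, essentially a reproof of the paper's Lemma~\ref{interpolation}, whose cited source \cite[Theorem 7.27]{GilbargTrudingerEllipticpde} proceeds by exactly that one-dimensional interpolation argument, which indeed transfers verbatim to Banach-space-valued functions. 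If you pursue your direct approach, the mollification/identification step you sketch does need to be written out (in particular, that the distributional $D_ju$ agrees with $T_{m_j}(u-\Delta u)$, or with the a.e.-line weak derivative), but there is no obstacle there; alternatively, quoting the core lemma as the paper does removes the issue entirely.
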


We will need the following lemmata for the proof.

\begin{lemma}[{\cite[Lemma 5.5.5]{HytonenetalAnalysisinBanachspaces}}]\label{density graph}
The space $\Cci(\R^d,X)$ is a core for $\Delta_p$ $(1\leq p<\infty)$.
\end{lemma}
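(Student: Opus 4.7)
The plan is to approximate any $f \in D(\Delta_p)$ in the graph norm by compactly supported smooth functions via a two-step construction: first mollify to obtain smooth approximants that simultaneously approximate $f$ and $\Delta f$ in $L^p$, then multiply by a cutoff to obtain compact support. The closedness of $\Delta_p$ (immediate, since $L^p$-convergence implies distributional convergence, and in distribution $\Delta$ is continuous) ensures that being a core amounts exactly to this kind of approximation.

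For the first step, let $\rho_\epsilon \in \Cci(\R^d,\R)$ be a standard mollifier supported in $B(0,\epsilon)$ and set $f_\epsilon := \rho_\epsilon \ast f$. Then $f_\epsilon \in C^\infty(\R^d,X)$, and $f_\epsilon \to f$ as well as $\Delta f_\epsilon = \rho_\epsilon \ast \Delta f \to \Delta f$ in $L^p$ as $\epsilon \to 0$. Crucially, because $\rho_\epsilon$ has compact support, Young's inequality gives $\partial^\alpha f_\epsilon = (\partial^\alpha \rho_\epsilon) \ast f \in L^p(\R^d,X)$ for every multi-index $\alpha$; hence $f_\epsilon \in W^{k,p}(\R^d,X)$ for every $k$, even though $f$ itself is a priori only in $L^p$. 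For the second step, pick $\chi \in \Cci(\R^d,\R)$ with $\chi \equiv 1$ on $B(0,1)$ and $\supp \chi \subset B(0,2)$, and set $\chi_n(x) := \chi(x/n)$, so that $\|\nabla \chi_n\|_\infty \leq C/n$ and $\|D^2 \chi_n\|_\infty \leq C/n^2$. Define $\varphi_{\epsilon,n} := \chi_n f_\epsilon \in \Cci(\R^d,X)$. Dominated convergence yields $\varphi_{\epsilon,n} \to f_\epsilon$ in $L^p$ as $n \to \infty$; the Leibniz rule gives
\[
\Delta \varphi_{\epsilon,n} = \chi_n\,\Delta f_\epsilon + 2\,\nabla \chi_n \cdot \nabla f_\epsilon + (\Delta \chi_n)\,f_\epsilon,
\]
where the first term converges to $\Delta f_\epsilon$ in $L^p$ and the remaining two terms are bounded in $L^p$-norm by $(C/n)\bigl(\|\nabla f_\epsilon\|_{L^p} + \|f_\epsilon\|_{L^p}\bigr) \to 0$, using the Sobolev regularity from step one. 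A diagonal extraction in $(\epsilon,n)$ then produces the desired sequence in $\Cci(\R^d,X)$.

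The main obstacle is subtle but essential: a priori we have no $W^{1,p}$ (let alone $W^{2,p}$) control on $f$ itself, since the identity $D(\Delta_p)=W^{2,p}$ is precisely what Proposition \ref{UMD implies domain} is about to establish using this very lemma, so we cannot invoke it here. A naive cutoff applied directly to $f$ would generate a cross term $\nabla \chi_n \cdot \nabla f$ that cannot be estimated. Mollifying first sidesteps this difficulty, because the derivatives pass onto the mollifier, producing $L^p$ bounds on $\nabla f_\epsilon$ and $D^2 f_\epsilon$ from nothing more than $f \in L^p$.
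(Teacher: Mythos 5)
Your proof is correct, and it proceeds by the standard mollify-then-truncate argument that one finds in the cited reference \cite[Lemma~5.5.5]{HytonenetalAnalysisinBanachspaces}; the paper itself states the lemma as a citation and contains no proof of its own to compare against. The key observations are all in place: $\Delta_p$ is closed because the distributional Laplacian is weak-$*$ continuous against test functions; $\Delta(\rho_\epsilon\ast f)=\rho_\epsilon\ast\Delta f$ follows by pairing with a test function $\varphi$ via $\check\rho_\epsilon\ast\varphi\in\Cci$; Young's inequality yields $\partial^\alpha f_\epsilon=(\partial^\alpha\rho_\epsilon)\ast f\in L^p$ for every $\alpha$, which is exactly what makes the Leibniz error terms $2\nabla\chi_n\cdot\nabla f_\epsilon$ and $(\Delta\chi_n)f_\epsilon$ vanish in $L^p$ as $n\to\infty$ at fixed $\epsilon$. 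Your closing remark about why the order of operations matters is the genuine content of the lemma: cutting off first would require $W^{1,p}$ control on $f$, which is unavailable since $D(\Delta_p)\subset W^{2,p}$ is precisely what Proposition~\ref{UMD implies domain} is trying to prove and, without UMD, is in general false. The only caveat worth flagging is that the final passage should be an explicit $\epsilon$-then-$n$ triangle-inequality estimate rather than a ``diagonal extraction'' in the subsequence sense; what you are doing is nested approximation, not extraction of a subsequence, though of course the conclusion is the same.
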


\begin{lemma}[Interpolation]\label{interpolation}
Let $1\leq p<\infty$ and let $\Omega\subset\R^d$ be open. For every $\varepsilon>0$ there exists a constant $C_\varepsilon>0$ such that
\begin{align*}
\|\nabla u\|_{L^p(\Omega,X^d)}\leq\varepsilon\|D^2u\|_{L^p(\Omega,X^{d\times d})}+C_\varepsilon\|u\|_{L^p(\Omega,X)},
\end{align*}
for every $u\in W^{2,p}_0(\Omega,X)$. If $\Omega$ has a $C^{1,1}$ boundary such an inequality is also valid in $W^{2,p}(\Omega,X)$.
\end{lemma}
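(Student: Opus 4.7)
The plan is to first establish the inequality on the whole space $\R^d$ by a direct Taylor expansion argument, and then reduce the two cases in the lemma to this setting.

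For $u\in C_c^\infty(\R^d,X)$ and $h>0$, Taylor's formula with integral remainder in the $e_j$-direction gives
\begin{align*}
u(x+he_j) = u(x) + h\,\partial_j u(x) + \int_0^h (h-s)\,\partial_j^2 u(x+se_j)\,ds.
\end{align*}
Solving for $\partial_j u(x)$ and taking the $L^p(\R^d,X)$-norm in $x$, then using that translation is an isometry on $L^p$ and Minkowski's integral inequality for the remainder, one obtains
\begin{align*}
\|\partial_j u\|_{L^p(\R^d,X)} \leq \frac{2}{h}\|u\|_{L^p(\R^d,X)} + \frac{h}{2}\|\partial_j^2 u\|_{L^p(\R^d,X)}.
\end{align*}
Given $\varepsilon>0$, I would choose $h=2\varepsilon/d$, sum over $j\in\{1,\ldots,d\}$ and use $\|\partial_j^2 u\|_p\leq \|D^2u\|_p$ to produce an inequality of the desired shape with $C_\varepsilon=d^2/\varepsilon$. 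Density of $C_c^\infty(\R^d,X)$ in $W^{2,p}(\R^d,X)$ (which holds by mollification, exactly as in the scalar case) then extends the estimate to all $u\in W^{2,p}(\R^d,X)$.

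For the two cases of the lemma I would now reduce to the inequality on $\R^d$. If $u\in W^{2,p}_0(\Omega,X)$, then by definition $u$ is the $W^{2,p}$-limit of a sequence in $C_c^\infty(\Omega,X)$, so its extension by zero $\tilde u$ lies in $W^{2,p}(\R^d,X)$ with $\|\tilde u\|_{W^{2,p}(\R^d,X)}=\|u\|_{W^{2,p}(\Omega,X)}$, and the first part of the lemma follows directly. If $\Omega$ has a $C^{1,1}$ boundary and $u\in W^{2,p}(\Omega,X)$, I would invoke a bounded extension operator $E\colon W^{2,p}(\Omega,X)\to W^{2,p}(\R^d,X)$. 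Such an operator is built in the classical way by localizing through a finite partition of unity subordinate to a boundary covering, straightening the boundary with local $C^{1,1}$ charts and using a higher-order reflection; since all ingredients are compositions with scalar diffeomorphisms and pointwise multiplications by scalar cut-offs, the construction carries over to $X$-valued functions without change. Applying the $\R^d$-estimate to $Eu$, using that $\|D^\alpha Eu\|_{L^p(\R^d,X)}\lesssim \|u\|_{W^{2,p}(\Omega,X)}$ for $|\alpha|\leq 2$, and restricting to $\Omega$, the second part follows after the standard trick of replacing $\varepsilon$ by $\varepsilon/\|E\|$ and absorbing the resulting constants into $C_\varepsilon$.

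The only nontrivial step is the existence of the extension operator in the vector-valued setting, but this is a purely structural observation rather than a genuine obstacle; the Taylor argument on $\R^d$ is elementary and avoids any Fourier-analytic machinery, so no UMD-type assumption on $X$ is needed.
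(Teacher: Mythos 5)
Your proof is correct, and it follows essentially the same route the paper points to: the authors merely cite Gilbarg--Trudinger Theorems~7.27 and~7.28, whose structure is exactly what you reproduce---an elementary (non-Fourier, non-compactness) estimate on $\R^d$, extension by zero for $W^{2,p}_0(\Omega,X)$, and a bounded $C^{1,1}$-extension operator for $W^{2,p}(\Omega,X)$. Your Taylor-with-integral-remainder argument is a clean and slightly more streamlined substitute for the one-dimensional interpolation lemma used in Gilbarg--Trudinger, and like theirs it vectorizes verbatim because the only analytic ingredients are Minkowski's integral inequality and translation invariance of $L^p$, both of which hold for $X$-valued functions. You also honor the paper's explicit caveat: no appeal to Rellich--Kondrachov or any compactness, which would fail in infinite dimensions. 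One small point worth making explicit in the $C^{1,1}$ case: after applying the $\R^d$-estimate to $Eu$, the term $\|D^2 Eu\|_{L^p(\R^d,X)}$ is bounded only by the full $W^{2,p}(\Omega,X)$-norm of $u$, so the right-hand side acquires an $\varepsilon\|\nabla u\|_{L^p(\Omega,X^d)}$ contribution that must be absorbed into the left (for $\varepsilon$ small, with the large-$\varepsilon$ case being trivial); your phrase ``absorbing the resulting constants'' should be spelled out as this absorption step, but the argument is standard and sound.
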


\begin{proof}
This can be proved analogously to the real-valued case \cite[Theorems 7.27 and 7.28]{GilbargTrudingerEllipticpde}. Note that the more elegant proof \cite[Exercise 7.19]{GilbargTrudingerEllipticpde} using the Rellich-Kondrachov theorem does not work in this case since the compact embeddings obviously cannot hold in infinite dimensional spaces.
\end{proof}

\begin{proof}[Proof of Proposition \ref{UMD implies domain}]
The inclusion $"\supseteq"$ is clear. Now let $f\in D(\Delta_p)$ and let $\varphi_n\in\Cci(\R^d,X)$ such that $\varphi_n\rightarrow f$ and $\Delta\varphi_n\rightarrow\Delta f$. By the estimates in Proposition \ref{Lp estimates Laplacian} and Lemma \ref{interpolation} there exists a constant $C>0$ such that
\begin{align*}
\|\varphi_n\|_{W^{2,p}(\R^d,X)}\leq C(\|\varphi_n\|_{L^p(\R^d,X)}+\|\Delta\varphi_n\|_{L^p(\R^d,X)}),
\end{align*}
for all $n\in\N$. This shows that $\varphi_n$ is Cauchy in $W^{2,p}(\R^d,X)$ and hence $f\in W^{2,p}(\R^d,X)$.
\end{proof}

We now want to show the converse of Proposition \ref{UMD implies domain}. We will need

\begin{proposition}\label{Gaussian semigroup}
Let $1\leq p<\infty$. The operator $\Delta_p$ is the generator of the strongly continuous Gaussian semigroup $G$ on $L^p(\R^d,X)$ given by
\begin{align*}
(G(t)f)(\xi):=(4\pi t)^{-\frac{d}{2}}\int_{\R^d}{f(\xi-\eta)\exp\left(-\frac{|\eta|^2}{4t}\right)\,d\eta},
\end{align*}
where $t>0,\xi\in\R^d$ and $f\in L^p(\R^d,X)$.
\end{proposition}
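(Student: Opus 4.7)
The plan is to first verify that $G$ is a strongly continuous contraction semigroup on $L^p(\R^d,X)$, and then identify its generator $A$ with $\Delta_p$ by establishing both inclusions $A\subseteq\Delta_p$ and $\Delta_p\subseteq A$. Since $G(t)$ is convolution with the scalar Gaussian kernel $k_t(\eta):=(4\pi t)^{-d/2}\exp(-|\eta|^2/(4t))$ of unit $L^1$-mass, the contraction property $\|G(t)f\|_{L^p}\leq\|f\|_{L^p}$ follows from Young's inequality, the semigroup law $G(t+s)=G(t)G(s)$ reduces to the scalar identity $k_{t+s}=k_t\ast k_s$ via Fubini, and strong continuity at $t=0$ is obtained first for $f\in\Cci(\R^d,X)$ from uniform continuity together with dominated convergence, then on $L^p(\R^d,X)$ by density and the uniform bound $\|G(t)\|\leq 1$.

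Next I would establish that $\Cci(\R^d,X)\subseteq D(A)$ with $A\varphi=\Delta\varphi$. For $\varphi\in\Cci(\R^d,X)$ a second-order Taylor expansion of $\varphi$ together with the identities $\int\eta_i k_t(\eta)\,d\eta=0$ and $\int\eta_i\eta_j k_t(\eta)\,d\eta=2t\delta_{ij}$ gives
\begin{align*}
\left\|\frac{G(t)\varphi(\xi)-\varphi(\xi)}{t}-\Delta\varphi(\xi)\right\|_X\leq C\,t^{1/2}
\end{align*}
uniformly on any compact $K\supset\supp\varphi$; off $K$ the value $\|G(t)\varphi(\xi)/t\|_X$ is controlled by the super-exponentially decaying tail of $k_t$, contributing an $L^q$-error of order $t^{-N}e^{-c/t}$ for any $1\leq q\leq\infty$. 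Combining the two regions yields $(G(t)\varphi-\varphi)/t\to\Delta\varphi$ in $L^q(\R^d,X)$ for every $q\in[1,\infty]$, hence $\varphi\in D(A)$ with $A\varphi=\Delta\varphi$.

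For $A\subseteq\Delta_p$, I fix $f\in D(A)$ and an arbitrary $\psi\in\Cci(\R^d,\R)$. Since $k_t$ is real-valued and even, Fubini gives
\begin{align*}
\int_{\R^d}\frac{G(t)f-f}{t}\,\psi\,d\xi=\int_{\R^d}f\cdot\frac{G(t)\psi-\psi}{t}\,d\xi.
\end{align*}
As $t\downarrow 0$ the left-hand side converges in $X$ to $\int Af\cdot\psi\,d\xi$, because $(G(t)f-f)/t\to Af$ in $L^p(\R^d,X)$, while by the previous step applied with $X=\R$ and exponent $p'$ (with $L^\infty$-convergence handling the case $p=1$) the right-hand side tends to $\int f\cdot\Delta\psi\,d\xi$. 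Therefore $\Delta f=Af$ in $\mathcal{D}'(\R^d,X)$, and since $Af\in L^p(\R^d,X)$ one concludes $f\in D(\Delta_p)$ with $\Delta_pf=Af$.

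To get the reverse inclusion $\Delta_p\subseteq A$, I would invoke Lemma \ref{density graph}: for every $f\in D(\Delta_p)$ there exist $\varphi_n\in\Cci(\R^d,X)$ with $\varphi_n\to f$ and $\Delta\varphi_n\to\Delta_pf$ in $L^p(\R^d,X)$. Since $A$ is closed as a semigroup generator and agrees with $\Delta$ on $\Cci(\R^d,X)$, closedness forces $f\in D(A)$ and $Af=\Delta_pf$. The main technical obstacle in this plan is the $L^p$-convergence in the second step: the Taylor estimate on the compact set $K$ is a routine computation, but the tail analysis off $K$ must use the super-exponential Gaussian decay to absorb the diverging prefactor $t^{-1-d/2}$ coming from $k_t/t$, and one needs this convergence uniformly enough to feed into the duality argument in step three.
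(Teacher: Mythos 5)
Your proof is correct, but it takes a genuinely different route from the one in the paper. For the inclusion $A\subset\Delta_p$ (where $A$ is the generator of $G$), the paper forms the dense, $G$-invariant tensor subspace $D := \spanup\{f\otimes x : f\in D(\Delta_p^\R),\, x\in X\}$, invokes the Engel--Nagel criterion (a dense invariant subspace of the domain is a core) to show $D$ is a core for $A$, and then concludes from closedness of $\Delta_p$; you instead get this inclusion by a duality argument exploiting the evenness of the Gaussian kernel, feeding in your Taylor/tail estimate applied with $X=\R$. For the reverse inclusion $\Delta_p\subset A$, the paper uses a resolvent comparison: $\lambda\in\rho(A)$ for $\lambda>0$, and $\lambda-\Delta_p$ is injective because injectivity can be tested with functionals $x'\in X'$ and reduced to the scalar case; you instead put $\Cci(\R^d,X)$ directly into $D(A)$ with $A\varphi=\Delta\varphi$ via the Taylor expansion, and then use Lemma~\ref{density graph} (that $\Cci$ is a core for $\Delta_p$) together with closedness of $A$. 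Both proofs are sound. The paper's is shorter because it outsources the pointwise analysis entirely to the known scalar generation theorem, to the tensor structure, and to abstract semigroup theory; yours is more self-contained and explicit (you obtain the concrete rate $t^{1/2}$ on compact sets and carry out the Gaussian tail bound), at the cost of the technical bookkeeping you flag at the end and of leaning on Lemma~\ref{density graph}, which the paper has available for other purposes but chooses not to use in this particular argument.
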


\begin{proof}
The assertion is well-known if $X=\R$ \cite[Example 3.7.6]{ArendtBattyHieberNeubranderVector-valuedlaplacetransformation}. Testing with $x'\in X'$ it follows immedeately that $G$ is a semigroup. The strong continuity of $G$ is also well-known \cite[Lemma 1.3.3]{ArendtBattyHieberNeubranderVector-valuedlaplacetransformation}. Let $A$ be the generator of $G$ and let $\Delta_p^\R$ be the operator $\Delta_p$ for $X=\R$. Consider the space
\begin{align*}
D:=\spanup\{f\otimes x,f\in D(\Delta_p^\R),x\in X\}.
\end{align*}
Since $D$ is dense in $L^p(\R^d,X)$ and invariant under the semigroup $G$ it follows that $D$ is a core for $A$ \cite[Proposition I.1.7]{EngelNagelSemigroups}. Obviously, $D\subset D(\Delta_p)$ and $\Delta_p$ coincides with $A$ on $D$. Since $\Delta_p$ is closed, it follows that $A\subset\Delta_p$. To show the inclusion $A\supset\Delta_p$ note that $\lambda\in\rho(A)$ for $\lambda>0$. It remains to show that $\lambda-\Delta_p$ is injective. But this follows immediately from the real-valued case.
\end{proof}

\begin{theorem}
The Banach space $X$ has the UMD property if and only if
\begin{align*}
D(\Delta_p)=W^{2,p}(\R^d,X)
\end{align*}
for some, equivalently all, $1<p<\infty$.
\end{theorem}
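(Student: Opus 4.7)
The plan is as follows. One implication (UMD implies the identification of domains) is Proposition \ref{UMD implies domain} and holds for every $1 < p < \infty$, so it suffices to prove the converse: assuming $D(\Delta_p) = W^{2,p}(\R^d, X)$ for some $1 < p < \infty$, I will deduce the UMD property. Both sides being Banach spaces under their natural norms and the inclusion $W^{2,p}(\R^d,X) \hookrightarrow D(\Delta_p)$ being continuous, the closed graph theorem furnishes a constant $C > 0$ with $\|u\|_{W^{2,p}} \leq C(\|u\|_{L^p} + \|\Delta u\|_{L^p})$ for every $u \in W^{2,p}(\R^d,X)$. Applying this to the dilation $u_\lambda(x) := u(\lambda x) \in \Cci(\R^d,X)$ and sending $\lambda \to \infty$ removes the lower-order term: indeed $\|u_\lambda\|_{L^p} = \lambda^{-d/p}\|u\|_{L^p}$ while $\|D_{jk}u_\lambda\|_{L^p}$ and $\|\Delta u_\lambda\|_{L^p}$ both scale like $\lambda^{2-d/p}$, yielding
\begin{align*}
\|D_{jk} u\|_{L^p(\R^d, X)} \leq C \|\Delta u\|_{L^p(\R^d, X)}, \qquad u \in \Cci(\R^d, X). \qquad (\ast)
\end{align*}

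I would then verify the hypothesis of Theorem \ref{necessity on domains} on the unit ball $\Omega = B(0,1)$. Given $f \in \Cci(B(0,1), X)$, set $v := \Phi \ast f \in C^\infty(\R^d, X)$; then $\Delta v = f$ classically. However, $v$ is in general not in $L^p(\R^d, X)$ (it even grows logarithmically at infinity when $d = 2$), so $(\ast)$ cannot be applied to $v$ itself. My remedy is to localize: fix $\phi \in \Cci(\R^d, \R)$ with $\phi \equiv 1$ on $B(0,2)$ and $\supp \phi \subset B(0,3)$, so that $\phi v \in \Cci(\R^d, X)$ and
\begin{align*}
\Delta(\phi v) = f + 2 \nabla\phi \cdot \nabla v + v \, \Delta \phi,
\end{align*}
since $\phi \equiv 1$ on $\supp f$. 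For $x \in \supp \nabla\phi$ and $y \in \supp f \subset B(0,1)$ one has $|x-y| \in [1,4]$, on which both $|\Phi|$ and $|\nabla\Phi|$ are uniformly bounded, so the two commutator terms are dominated pointwise by a constant times $\|f\|_{L^1(B(0,1))}$, and by H\"older by a constant times $\|f\|_{L^p}$. Hence $\|\Delta(\phi v)\|_{L^p} \leq C' \|f\|_{L^p}$, and applying $(\ast)$ to $\phi v$, together with $D_{jk}(\phi v) = D_{jk} v$ on $B(0,1)$, yields
\begin{align*}
\|D_{jk}(\Phi \ast f)\|_{L^p(B(0,1), X)} \leq C'' \|f\|_{L^p(B(0,1), X)},
\end{align*}
and Theorem \ref{necessity on domains} then delivers the UMD property of $X$.

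The hard part, I expect, is precisely that the Newtonian potential $\Phi \ast f$ is not globally in $L^p$, so the closed graph estimate $(\ast)$ cannot be invoked on it directly. The cutoff trick is the key move: it converts the obstacle into commutator-type error terms supported in an annulus separated from $\supp f$, where $\Phi$ and $\nabla\Phi$ are harmless, so the errors are easily controlled by $\|f\|_{L^p}$.
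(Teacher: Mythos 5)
Your proof is correct, and it follows a genuinely different path to the converse implication than the paper does. The paper works entirely on the Fourier side: from $\mu\in\rho(\Delta_p)$ and $D(\Delta_p)=W^{2,p}$ it extracts the bounded multiplier $m(\xi)=-4\pi^2\xi_j\xi_k/(\mu+4\pi^2|\xi|^2)$, then uses the dilation invariance of the multiplier norm together with a Fatou argument to pass to the limit $\lambda\to\infty$ and conclude that the homogeneous multiplier $-\xi_j\xi_k/|\xi|^2$ lies in $\mathfrak{M}L^p(\R^d,X)$, so Theorem \ref{geissmultiplier} applies directly. You instead carry out the dilation argument in physical space to strip the lower-order term from the closed-graph estimate, arriving at the homogeneous a priori bound $\|D_{jk}u\|_{L^p}\leq C\|\Delta u\|_{L^p}$ for $u\in\Cci(\R^d,X)$, and then route through Theorem \ref{necessity on domains}: the cutoff $\phi(\Phi\ast f)$ converts the failure of global $L^p$-integrability of the Newtonian potential into commutator error terms supported in an annulus separated from $\supp f$, where $\Phi$ and $\nabla\Phi$ are bounded, so the errors are controlled by $\|f\|_{L^1}\lesssim\|f\|_{L^p}$ via H\"older on the bounded domain. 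This yields exactly the hypothesis of Theorem \ref{necessity on domains} on $B(0,1)$. Both proofs ultimately rest on the Geiss--Montgomery-Smith--Saksman theorem, but yours cleverly recycles Theorem \ref{necessity on domains} (which the paper proves for the $C^{1,1}$-domain result, Corollary \ref{PoissonUMD}, and does not reuse here), trading the Fourier-multiplier bookkeeping and the Fatou limit for an elementary cutoff computation. The paper's version is shorter and self-contained at the multiplier level; yours is more in the PDE spirit and exhibits more clearly how the whole-space result is a special case of the domain phenomenon already captured by Theorem \ref{necessity on domains}.
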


\begin{proof}
It remains to show the "if" part. Since $\Delta_p$ generates a $C_0$ semigroup there exists $\mu>0$ such that $\mu\in\rho(\Delta_p)$. By assumption we have
\begin{align*}
\|D_jD_kf\|_{L^p(\R^d,X)}\leq\|f\|_{W^{2,p}(\R^d,X)}\leq C\|\mu f-\Delta f\|_{L^p(\R^d,X)},
\end{align*}
where $C=\|R(\mu,\Delta_p)\|_{\mathcal{L}(L^p(\R^d,X),W^{2,p}(\R^d,X))}$ and $1\leq j,k\leq d$. This holds in particular for $f\in\Cci(\R^d,X)$. Taking the Fourier transform on both sides yields that the function
\begin{align*}
m(\xi):=\frac{-4\pi^2\xi_j\xi_k}{\mu+4\pi^2|\xi|}\quad(\xi\in\R^d)
\end{align*}
is in $\mathfrak{M}L^p(\R^d,X)$. We now use a scaling argument with the same notation as in the proof of Theorem \ref{necessity on domains}. The transformation formula shows that
\begin{align*}
\mathcal{F}^{\pm1}f_\lambda=\lambda^{-d}(\mathcal{F}^{\pm1}f)_{\lambda^{-1}}
\end{align*}
for every $f\in\Cci(\R^d,X)$. Hence the operator $T_{m_\lambda}$ assiociated with the multiplier $m_\lambda$ satisfies
\begin{align*}
T_{m_\lambda}f&=\mathcal{F}^{-1}(m_\lambda\mathcal{F}f)\\
&=\mathcal{F}^{-1}((m(\mathcal{F}f)_{\lambda^{-1}})_\lambda)\\
&=\mathcal{F}^{-1}((m\lambda^d\mathcal{F}f_\lambda)_\lambda)\\
&=(\mathcal{F}^{-1}(m\mathcal{F}f_\lambda))_{\lambda^{-1}}\\
&=(T_mf_\lambda)_{\lambda^{-1}},
\end{align*}
from which we can estimate
\begin{align*}
\|T_{m_\lambda}f\|_{L^p(\R^d,X)}&=\|(T_mf_\lambda)_{\lambda^{-1}}\|_{L^p(\R^d,X)}\\
&=\lambda^{\frac{d}{p}}\|T_mf_\lambda\|_{L^p(\R^d,X)}\\
&\leq\lambda^{\frac{d}{p}}\|T_m\|_{\mathcal{L}(L^p(\R^d,X))}\|f_\lambda\|_{L^p(\R^d,X)}\\
&=\|T_m\|_{\mathcal{L}(L^p(\R^d,X))}\|f\|_{L^p(\R^d,X)}.
\end{align*}
By symmetry we obtain $\|T_{m_\lambda}\|_{\mathcal{L}(L^p(\R^d,X))}=\|T_m\|_{\mathcal{L}(L^p(\R^d,X))}$. Note that
\begin{align*}
m_\lambda\rightarrow-\frac{\xi_j\xi_k}{|\xi|^2}=:m_\infty
\end{align*}
pointwise and that $|m_\lambda|\leq1$ for all $\lambda>0$. By the dominated convergence theorem we have $T_mf\rightarrow T_{m_\infty}f$. Fatou's lemma shows that $m_\infty\in\mathfrak{M}L^p(\R^d,X)$ and hence the claim follows from Theorem \ref{geissmultiplier}.
\end{proof}

\section{Elliptic operators on domains}\label{strong solutions section}

In the last section we showed that the Laplacian on $L^p(\R^d,X)$ has the maximal regularity domain $W^{2,p}(\R^d,X)$ if and only if $X$ is a UMD space. Our aim in this section is to show the analogous result for the Dirichlet Laplacian on a bounded domain $\Omega$ of class $C^{1,1}$. In fact, we also consider more general operators.\\

Let $L$ be an elliptic operator in non-divergence form given by
\begin{align*}
L:=a_{ij}D_{ij}+b_iD_i+c,
\end{align*}
where $a_{ij}, b_i, c\in L^\infty(\Omega,\R)$ and $a=(a_{ij})_{ij}$ is a symmetric matrix satisfying
\begin{align*}
a_{ij}(\cdot)\xi_i\xi_j\geq\lambda|\xi|^2
\end{align*}
almost everywhere in $\Omega$ for some fixed $\lambda>0$ and all $\xi\in\R^d$. In this section we consider the Dirichlet problem
\begin{align*}
\begin{cases}
Lu=f\\
u-\varphi\in W^{1,p}_0(\Omega,X),
\end{cases}
\end{align*}
where $f\in L^p(\Omega,X)$ and $\varphi\in W^{2,p}(\Omega,X)$ are given. We will show that the existence of a unique solution is equivalent to the UMD property. We first start with the sufficiency of the UMD property. For $L$ we have the following $L^p$ estimate.

\begin{theorem}\label{global Lp estimate for L}
Let $\Omega\subset\R^d$ be open and bounded with a $C^{1,1}$ boundary and let $L$ be an elliptic operator as above. Furthermore assume that $a\in C(\overline{\Omega},\R^{d\times d})$. Let $X$ be a Banach space which has the UMD property and let $1<p<\infty$. Then there exists a constant $C>0$ such that
\begin{align*}
\|u\|_{W^{2,p}(\Omega,X)}\leq C(\|u\|_{L^p(\Omega,X)}+\|Lu\|_{L^p(\Omega,X)})
\end{align*}
for all $u\in W^{2,p}(\Omega,X)\cap W^{1,p}_0(\Omega,X)$.
\end{theorem}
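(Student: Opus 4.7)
The plan is to adapt the classical freezing-of-coefficients proof of the Agmon-Douglis-Nirenberg $L^p$-estimates (see e.g.\ \cite[Chapter~9]{GilbargTrudingerEllipticpde}), substituting Proposition \ref{UMD implies domain} for the scalar $L^p$-bound of second derivatives in terms of the Laplacian. The only vector-valued input required is the estimate $\|D^2u\|_{L^p(\R^d,X)}\leq C\|\Delta u\|_{L^p(\R^d,X)}$ for $u\in W^{2,p}(\R^d,X)$; once this is available, the standard route of freezing the leading coefficients, flattening the boundary, and summing via a partition of unity carries over, since all subsequent manipulations are linear and commute with $X$.

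First I would treat constant-coefficient operators. For a symmetric positive-definite $A$, the linear change of variables $y=A^{-1/2}x$ conjugates $A_{ij}D_{ij}$ to $\Delta$, so Proposition \ref{UMD implies domain} together with scaling yields $\|D^2u\|_p\leq C_A\|A_{ij}D_{ij}u\|_p$ for all $u\in W^{2,p}(\R^d,X)$. For the half-space $\R^d_+$ with zero trace on $\{x_d=0\}$, I would instead use the Cholesky factorization $A=LL^T$; the upper-triangular matrix $L^{-T}$ preserves $\R^d_+$ and reduces $A_{ij}D_{ij}$ to $\Delta$. The odd reflection across $\{x_d=0\}$ of a function $v\in W^{2,p}(\R^d_+,X)\cap W^{1,p}_0(\R^d_+,X)$ lies in $W^{2,p}(\R^d,X)$ and has Laplacian equal to the odd extension of $\Delta v$, so the analogous estimate on $\R^d_+$ follows from the one on $\R^d$.

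Next I would freeze the coefficients. Given $\varepsilon>0$, uniform continuity of $a$ on $\overline\Omega$ provides $r_0>0$ with $|a(x)-a(x_0)|<\varepsilon$ whenever $|x-x_0|<r_0$. For $u\in W^{2,p}(\Omega,X)\cap W^{1,p}_0(\Omega,X)$ supported either in a small interior ball $B(x_0,r)\ssubset\Omega$ (extended by zero to $\R^d$), or in a boundary neighborhood of $x_0\in\partial\Omega$ straightened by a $C^{1,1}$ chart to a half-ball, I would write
\begin{align*}
a_{ij}(x_0)D_{ij}u \;=\; Lu \;-\; \bigl(a_{ij}(x)-a_{ij}(x_0)\bigr)D_{ij}u \;-\; b_iD_iu \;-\; cu
\end{align*}
and apply the appropriate constant-coefficient estimate. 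The difference term contributes at most $C\varepsilon\|D^2u\|_p$ on the right-hand side, which is absorbed on the left for $\varepsilon$ small, while the first- and zero-order terms are controlled using Lemma \ref{interpolation}. A finite cover of $\overline\Omega$ by such balls and charts together with a subordinate smooth partition of unity $\{\zeta_k\}$ then reduces the global estimate to the localized ones: writing $L(\zeta_k u)=\zeta_k Lu + [L,\zeta_k]u$, the commutator is of order at most one in $u$ and is absorbed by a further appeal to Lemma \ref{interpolation}.

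The main obstacle is the boundary case. Two ingredients are essential there: first, the $C^{1,1}$ regularity of $\partial\Omega$ ensures that the boundary charts have Lipschitz derivatives, so the pulled-back operator retains \emph{continuous} leading coefficients (mere $C^1$ regularity would not suffice for the freezing argument); second, odd reflection across a flat boundary preserves $W^{2,p}$ precisely because both the trace and the tangential first derivatives of $v$ vanish on $\{x_d=0\}$ -- a consequence of $v\in W^{1,p}_0$. With these two facts the boundary case reduces to the interior analysis in a half-space, and the remaining steps are a routine assembly.
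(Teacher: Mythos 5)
Your proposal follows the same route as the paper, which simply references the classical Gilbarg--Trudinger argument (Theorems 9.11--9.13 there) with the vector-valued second-order Riesz transform estimate (Proposition \ref{Lp estimates Laplacian}, which is the more direct citation than Proposition \ref{UMD implies domain}) and the interpolation estimate of Lemma \ref{interpolation} substituted for their scalar counterparts; your fleshing-out of that outline is correct in substance. One small slip worth noting: with the Cholesky factorization $A=LL^T$ ($L$ lower triangular), the change of variables $y=L^{-T}x$ turns $a_{ij}D_{ij}$ into the operator with coefficient matrix $L^TL$, not into $\Delta$; to obtain an upper-triangular $M$ with $M^{-1}M^{-T}=A$ one should instead factor $A^{-1}=LL^T$ and take $M=L^T$ (or, equivalently, factor $A=NN^T$ with $N$ upper triangular) -- but only the \emph{existence} of such a half-space-preserving transformation is needed, so this does not affect the structure of the argument.
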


\begin{proof}
Proceed as in the proof of \cite[Theorem 9.13]{GilbargTrudingerEllipticpde} proving the estimates for the Laplacian \cite[Theorem 9.9]{GilbargTrudingerEllipticpde} using Proposition \ref{Lp estimates Laplacian} and also using the interpolation estimate in Lemma \ref{interpolation}. 
\end{proof}

To show existence we will need an estimate which does not depend on $\|u\|_{L^p(\Omega,X)}$. As in Lemma \ref{interpolation}, we cannot prove this estimate analogously to the real-valued case \cite[Lemma 9.17]{GilbargTrudingerEllipticpde} since this proof uses the Rellich-Kondrachov theorem. We gather some information about the real-valued case.

\begin{theorem}\label{strong solution existence real and positive}
\begin{compactenum}[(a)]
\item Let $\Omega\subset\R^d$ be open and bounded with a $C^{1,1}$-boundary and let $L$ be an elliptic operator with $a\in C(\overline{\Omega},\R^{d\times d})$ and $c\leq0$. Then for every data $f\in L^p(\Omega,\R)$ and $\varphi\in W^{2,p}(\Omega,\R)$ with $1<p<\infty$ there exists a unique $u\in W^{2,p}(\Omega,\R)$ satisfying $Lu=f$ and $u-\varphi\in W^{1,p}_0(\Omega,\R)$.
\item In the setting of (a) let $\varphi=0$ and define 
\[
    T:L^p(\Omega,\R)\rightarrow W^{2,p}(\Omega,\R)\cap W^{1,p}_0(\Omega,\R)
\]
via $f\mapsto u$. Then $-T$ is a positive operator, that is, $Tf\leq0$ whenever $f\geq0$.
\end{compactenum}
\end{theorem}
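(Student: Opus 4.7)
My plan is to reduce the theorem entirely to classical scalar $L^p$ theory for elliptic operators, as developed in \cite[Chapter 9]{GilbargTrudingerEllipticpde}; since $\R$ is a UMD space, the scalar-valued version of Theorem \ref{global Lp estimate for L} is available as the starting a priori estimate, and the only genuinely new ingredient beyond that estimate is the Aleksandrov-Bakelman-Pucci (ABP) maximum principle for strong solutions, which is where the hypothesis $c\leq 0$ enters.

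For part (a), I would first reduce to homogeneous boundary data: setting $v:=u-\varphi$ turns the problem into $Lv=f-L\varphi\in L^p(\Omega,\R)$ with $v\in W^{1,p}_0(\Omega,\R)\cap W^{2,p}(\Omega,\R)$, so it suffices to treat $\varphi=0$. Uniqueness then follows from the ABP maximum principle applied to $\pm u$: if $Lu=0$ with $u\in W^{2,p}\cap W^{1,p}_0$ and $c\leq 0$, then $\sup_\Omega u\leq \sup_{\partial\Omega} u^+=0$ and similarly for $-u$, giving $u\equiv 0$. For existence, I would run the method of continuity along the family
\begin{align*}
L_t:=tL+(1-t)\Delta,\quad t\in[0,1],
\end{align*}
noting that each $L_t$ is again elliptic with $c_t\leq 0$ and continuous leading coefficients. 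Combining the scalar-valued form of Theorem \ref{global Lp estimate for L} (applied to $L_t$, with constants uniform in $t$ because the ellipticity and coefficient bounds are uniform) with the ABP bound $\|u\|_{L^p(\Omega,\R)}\leq C\|L_tu\|_{L^p(\Omega,\R)}$ yields the absorbed a priori estimate
\begin{align*}
\|u\|_{W^{2,p}(\Omega,\R)}\leq C\|L_tu\|_{L^p(\Omega,\R)},
\end{align*}
uniformly in $t$. Since the Laplacian ($t=0$) with $W^{1,p}_0$ boundary data is surjective onto $L^p(\Omega,\R)$ by classical theory, the method of continuity transports solvability from $t=0$ to $t=1$, giving existence for $L$.

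For part (b), suppose $f\in L^p(\Omega,\R)$ with $f\geq 0$ and set $u:=Tf\in W^{2,p}\cap W^{1,p}_0$, so that $Lu=f\geq 0$ and $u|_{\partial\Omega}=0$ in the trace sense. Since $c\leq 0$, the ABP maximum principle \cite[Theorem 9.1]{GilbargTrudingerEllipticpde} yields $\sup_\Omega u\leq \sup_{\partial\Omega}u^+=0$, hence $Tf=u\leq 0$, i.e.\ $-T\geq 0$.

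The main obstacle — though classical — is the ABP maximum principle, which is the mechanism by which the hypothesis $c\leq 0$ converts the qualitative estimate of Theorem \ref{global Lp estimate for L} (with $\|u\|_{L^p}$ on the right) into the sharper form needed for the continuity argument, and simultaneously delivers the positivity in (b). Everything else (the reduction to $\varphi=0$, the continuity method, and the scalar $L^p$ solvability for $\Delta$) is routine once the estimate is in hand.
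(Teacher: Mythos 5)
Your overall strategy for (a) --- reduction to $\varphi=0$, then the method of continuity starting from the Laplacian --- is the classical route and is essentially how \cite[Theorem 9.15]{GilbargTrudingerEllipticpde} is proved; the paper simply cites that theorem for (a). The genuine gap is in your use of the Aleksandrov--Bakelman--Pucci principle, both in (b) and in the claimed bound $\|u\|_{L^p}\leq C\|L_tu\|_{L^p}$ in (a). The ABP principle \cite[Theorem 9.1]{GilbargTrudingerEllipticpde} applies to $u\in C^0(\overline{\Omega})\cap W^{2,d}_{\textnormal{loc}}(\Omega)$ and bounds $\sup_\Omega u$ by $\sup_{\partial\Omega}u^+$ plus an $L^d$-norm of the right-hand side. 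For $p<d$ the solution $u=Tf$ is only known to lie in $W^{2,p}(\Omega,\R)\cap W^{1,p}_0(\Omega,\R)$: it need not belong to $W^{2,d}_{\textnormal{loc}}$, it need not be continuous up to the boundary (so ``$u|_{\partial\Omega}=0$ in the trace sense'' does not produce the classical boundary supremum that ABP requires), and $\|Lu\|_{L^d}$ is not controlled by $\|Lu\|_{L^p}$. Hence neither the positivity claim in (b) nor the absorbed a priori estimate in your continuity argument follows directly from ABP when $p<d$; the estimate $\|u\|_{W^{2,p}}\leq C\|Lu\|_{L^p}$ for all $1<p<\infty$ is \cite[Lemma 9.17]{GilbargTrudingerEllipticpde} and is obtained there by a compactness argument, not from ABP.

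The paper closes exactly this gap in (b) by a two-step argument. First take $f\in\Cci(\Omega,\R)$ with $f\geq0$; then $f\in L^d(\Omega,\R)$, so by uniqueness $Tf\in W^{2,d}(\Omega,\R)\cap W^{1,d}_0(\Omega,\R)$, which is continuous up to the boundary by Morrey's embedding and vanishes there classically, and the maximum principle \cite[Theorem 9.6]{GilbargTrudingerEllipticpde} yields $Tf\leq0$. For general $f\geq0$ in $L^p$ one approximates by nonnegative test functions, uses \cite[Lemma 9.17]{GilbargTrudingerEllipticpde} to get convergence of the corresponding solutions in $W^{2,p}$, and passes to the limit using uniqueness. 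You need some version of this regularize-and-pass-to-the-limit step; your direct argument is fine only in the range $p\geq d$, where $W^{2,p}(\Omega,\R)\hookrightarrow C(\overline{\Omega})$ and the $L^d$-norms are controlled.
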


\begin{proof}
(a) is the assertion of \cite[Theorem 9.15]{GilbargTrudingerEllipticpde}. For the proof of (b) we first let $f\in\Cci(\Omega,\R)_+$. Then $f\in L^d(\Omega,\R)$ and hence by uniqueness the solution $u:=Tf$ is an element of $W^{2,d}(\Omega,\R)\cap W^{1,d}_0(\Omega,\R)$. Furthermore it is continuous up to the boundary by Morrey's embedding theorem. Since $\Omega$ has a $C^{1,1}$ boundary this implies that $u_{|\partial\Omega}=0$ in the classical sense. Suppose that $u(\xi)>0$ for some $\xi\in\Omega$. Then $u$ has a nonnegative maximum in $\Omega$. This contradicts the maximum principle \cite[Theorem 9.6]{GilbargTrudingerEllipticpde}.\\
Now let $f\geq0$ be arbitrary. There exist nonnegative functions $f_n\in\Cci(\Omega,\R)$ such that $f_n\rightarrow f$ in $L^p(\Omega,\R)$. By the first step we know that the solution $u_n:=Tf_n$ is non-positive. The estimate in \cite[Lemma 9.17]{GilbargTrudingerEllipticpde} shows that $u_n$ is Cauchy and hence convergent in $W^{2,p}(\Omega,\R)\cap W^{1,p}_0(\Omega,\R)$. The uniqueness of the solution shows that $u\leq0$.
\end{proof}

\begin{proposition}\label{estimate without u}
In the setting of Theorem \ref{global Lp estimate for L} let $c\leq0$. Then we have the estimate
\begin{align*}
\|u\|_{W^{2,p}(\Omega,X)}\leq C\|Lu\|_{L^p(\Omega,X)},
\end{align*}
for some $C>0$.
\end{proposition}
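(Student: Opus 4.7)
The plan is to absorb the $\|u\|_{L^p(\Omega,X)}$ term in Theorem \ref{global Lp estimate for L} by establishing an a priori bound of the form $\|u\|_{L^p(\Omega,X)}\leq C\|Lu\|_{L^p(\Omega,X)}$. Since the standard scalar proof uses Rellich-Kondrachov compactness and thus fails here, I will instead import the scalar solution operator from Theorem \ref{strong solution existence real and positive}(a) and exploit the positivity from part (b) to extend it to Bochner spaces.

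First, Theorem \ref{strong solution existence real and positive}(a) says $L\colon W^{2,p}(\Omega,\R)\cap W^{1,p}_0(\Omega,\R)\to L^p(\Omega,\R)$ is a bijection, so by the open mapping theorem its inverse $T$ is bounded; in particular $\|Tf\|_{L^p(\Omega,\R)}\leq M\|f\|_{L^p(\Omega,\R)}$. Second, for a simple function $f=\sum_{i=1}^{n}\chi_{A_i}x_i\in L^p(\Omega,X)$ I define $T_X f:=\sum_{i=1}^{n}(T\chi_{A_i})\,x_i$. By Theorem \ref{strong solution existence real and positive}(b) each $T\chi_{A_i}\leq 0$, so the triangle inequality yields the pointwise domination
\[
  \|T_X f(\xi)\|_X \;\leq\; \sum_{i=1}^{n} |T\chi_{A_i}(\xi)|\,\|x_i\|_X \;=\; -T(\|f(\cdot)\|_X)(\xi),
\]
and taking $L^p$-norms gives $\|T_X f\|_{L^p(\Omega,X)}\leq M\|f\|_{L^p(\Omega,X)}$. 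By density of simple functions, $T_X$ extends to a bounded operator on $L^p(\Omega,X)$. Third, I will show $T_X(Lu)=u$ for every $u\in W^{2,p}(\Omega,X)\cap W^{1,p}_0(\Omega,X)$ by testing: for any $x'\in X'$ the scalar function $x'\circ u$ lies in $W^{2,p}(\Omega,\R)\cap W^{1,p}_0(\Omega,\R)$ and solves $L(x'\circ u)=x'\circ Lu$, so by scalar uniqueness $x'\circ u=T(x'\circ Lu)$; on the other hand the defining formula on simple functions, extended by continuity, gives $x'\circ T_X g = T(x'\circ g)$ for every $g\in L^p(\Omega,X)$. Taking $g=Lu$ and using that $X'$ separates points yields $T_X(Lu)=u$, hence $\|u\|_{L^p(\Omega,X)}\leq M\|Lu\|_{L^p(\Omega,X)}$. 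Substituting this into Theorem \ref{global Lp estimate for L} finishes the proof.

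The crux is the extension of $T$ from scalar to vector-valued $L^p$: arbitrary bounded scalar $L^p$ operators need not admit bounded Bochner extensions, and it is precisely the positivity of $-T$ from Theorem \ref{strong solution existence real and positive}(b)—itself a consequence of the scalar maximum principle—that supplies the pointwise domination allowing the operator to pass through the vector norm.
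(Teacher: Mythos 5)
Your argument is correct and follows the same route as the paper: you invert $L$ via the scalar solution operator $T$ from Theorem \ref{strong solution existence real and positive}, use the positivity of $-T$ to obtain the bounded Bochner extension, verify that it inverts $L$ in the vector-valued setting, and feed the resulting bound on $\|u\|_{L^p(\Omega,X)}$ back into Theorem \ref{global Lp estimate for L}. The only cosmetic difference is that you prove the extension lemma by hand via pointwise domination on disjointly supported simple functions, whereas the paper cites it directly as Theorem 2.1.3 of Hyt\"onen et al.
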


\begin{proof}
Let $T$ be the operator in Theorem \ref{strong solution existence real and positive} (b) considered as a bounded operator $L^p(\Omega,\R)\rightarrow L^p(\Omega,\R)$. The operator $T$ can be linearly extended to finite sums of tensors of the form $f\otimes x$ with $f\in L^p(\Omega,\R)$ and $x\in X$. Since $-T$ is a positive operator there exists a unique bounded operator $\tilde{T}$ with the same norm as $T$ mapping $L^p(\Omega,X)\rightarrow L^p(\Omega,X)$ which coincides with $T$ on finite sums of tensors \cite[Theorem 2.1.3]{HytonenetalAnalysisinBanachspaces}. Note that $\tilde{T}Lu=u$ and thus $\|u\|_{L^p(\Omega,X)}\leq\|\tilde{T}\|_{\mathcal{L}(L^p(\Omega,X))}\|Lu\|_{L^p(\Omega,X)}$. Combined with the estimate in Theorem \ref{global Lp estimate for L} this yields the result.
\end{proof}

We are now in a position to prove the existence and uniqueness of strong solutions for the Poisson problem with Dirichlet boundary data.

\begin{theorem}\label{strong solution existence}
Let $\Omega\subset\R^d$ be open and bounded  with a $C^{1,1}$ boundary and let $L$ be an elliptic operator with $a\in C\left(\overline{\Omega},\R^{d\times d}\right)$ and $c\leq0$. Furthermore let $X$ be a space which has the UMD property. Then for every data $f\in L^p(\Omega,X)$ and $\varphi\in W^{2,p}(\Omega,X)$ with $1<p<\infty$ there exists a unique $u\in W^{2,p}(\Omega,X)$ solving $Lu=f$ such that $u-\varphi\in W^{1,p}_0(\Omega,X)$.
\end{theorem}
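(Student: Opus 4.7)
The plan is to reduce to the homogeneous boundary case $\varphi=0$ and then combine the a priori estimate of Proposition~\ref{estimate without u} with a tensor-product density argument that imports existence from the real-valued Theorem~\ref{strong solution existence real and positive}(a).

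First I would handle the reduction. Given $\varphi\in W^{2,p}(\Omega,X)$, the assumption $a_{ij},b_i,c\in L^\infty$ yields $L\varphi\in L^p(\Omega,X)$. Setting $v=u-\varphi$, the problem becomes $Lv=f-L\varphi=:\tilde f\in L^p(\Omega,X)$ with $v\in W^{1,p}_0(\Omega,X)\cap W^{2,p}(\Omega,X)$. Uniqueness is then immediate from Proposition~\ref{estimate without u}: if $v_1,v_2$ both solve this homogeneous-boundary problem, their difference lies in $D:=W^{2,p}(\Omega,X)\cap W^{1,p}_0(\Omega,X)$ and is annihilated by $L$, so the estimate $\|v_1-v_2\|_{W^{2,p}}\leq C\|L(v_1-v_2)\|_{L^p}=0$ forces $v_1=v_2$.

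For existence I would consider the bounded linear operator $L\colon D\to L^p(\Omega,X)$, where $D$ is equipped with the $W^{2,p}$-norm. Proposition~\ref{estimate without u} says $L$ is bounded below, hence injective with closed range; therefore it suffices to exhibit a dense subset of $L^p(\Omega,X)$ inside the range. The natural candidate is the algebraic tensor product $L^p(\Omega,\R)\otimes X$, which is dense in $L^p(\Omega,X)$. For an elementary tensor $g\otimes x$ with $g\in L^p(\Omega,\R)$, Theorem~\ref{strong solution existence real and positive}(a) produces $w\in W^{2,p}(\Omega,\R)\cap W^{1,p}_0(\Omega,\R)$ with $Lw=g$; since $L$ has scalar coefficients, $w\otimes x$ lies in $D$ and satisfies $L(w\otimes x)=(Lw)\otimes x=g\otimes x$. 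Extending by linearity covers all of $L^p(\Omega,\R)\otimes X$, so $L(D)$ is dense and closed, hence equal to $L^p(\Omega,X)$, giving a $v\in D$ with $Lv=\tilde f$; then $u:=v+\varphi$ is the desired solution.

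I do not anticipate a serious obstacle, since the heavy lifting has already been done: the UMD hypothesis was used to establish the a priori estimate (via Proposition~\ref{Lp estimates Laplacian} and Theorem~\ref{global Lp estimate for L}) and the positivity of $-T$ in the scalar case was used to control $\|u\|_{L^p}$ by $\|Lu\|_{L^p}$. The only point deserving care is the verification that elementary tensors indeed live in $D$ and that $L$ acts on them factorwise, but this is immediate from the fact that the coefficients $a_{ij},b_i,c$ are real-valued and from the characterization of $W^{k,p}_0(\Omega,X)$ by scalar test functionals.
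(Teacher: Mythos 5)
Your proposal is correct and follows essentially the same route as the paper: reduce to $\varphi=0$, solve on the algebraic tensor product $L^p(\Omega,\R)\otimes X$ factorwise via the scalar existence result, and use the a priori estimate of Proposition~\ref{estimate without u} to pass to general $f\in L^p(\Omega,X)$ and to get uniqueness. The only cosmetic difference is that you phrase the limiting step in operator-theoretic language (bounded below $\Rightarrow$ injective with closed range, plus dense range $\Rightarrow$ surjective) whereas the paper directly exhibits a Cauchy sequence of solutions in $W^{2,p}(\Omega,X)\cap W^{1,p}_0(\Omega,X)$; these are the same argument.
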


\begin{proof}
By subtracting $L\varphi$ from $f$ one sees that it is enough to consider the case $\varphi=0$. Let first $f=\sum_{k=1}^{n}f_k\otimes x_k$ be a simple function with $f_k\in L^p(\Omega,\R)$ and $x_k\in X$. For the data $f_k$ the real-valued Theorem \ref{strong solution existence real and positive} yields existence of a solution $u_k\in W^{2,p}(\Omega,\R)\cap W^{1,p}_0(\Omega,\R)$. Thus the function 
\begin{align*}
u:=\sum_{k=1}^{n}u_k\otimes x_k
\end{align*}
is a solution for $f$. For general $f\in L^p(\Omega,X)$ there exists a sequence $f_n$ of finite sums of tensors which converges to $f$ in $L^p(\Omega,X)$. Let $u_n$ be the solution for $f_n$. Then by Proposition \ref{estimate without u} we have
\begin{align*}
\|u_n-u_m\|_{W^{2,p}(\Omega,X)}\leq C\|f_n-f_m\|_{L^p(\Omega,X)}.
\end{align*}
Hence $u_n\rightarrow u$ in $W^{2,p}(\Omega,X)\cap W^{1,p}_0(\Omega,X)$ and $Lu=f$. The uniqueness follows from the $L^p$ estimates.
\end{proof}

The existence theorem has the following converse which gives again a characterization of the UMD property by a regularity property of the Poisson equation on domains.

\begin{corollary}\label{PoissonUMD}
Let $1<p<\infty$, $\Omega\subset\R^d$ be an open and bounded set with a $C^{1,1}$ boundary and let $X$ be a Banach space. The following are equivalent:
\begin{compactenum}[(i)]
\item For every $f\in L^p(\Omega,X)$ there exists a unique $u\in W^{2,p}(\Omega,X)\cap W^{1,p}_0(\Omega,X)$ satisfying $\Delta u=f$.
\item $X$ has the UMD property.
\end{compactenum}
\end{corollary}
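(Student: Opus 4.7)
The direction (ii) $\Rightarrow$ (i) is the special case $L=\Delta$ of Theorem \ref{strong solution existence} (with $a=I$, $b=0$, $c=0$, and boundary datum $\varphi=0$); ellipticity holds with $\lambda=1$ and the sign condition $c\leq 0$ is trivial.

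For (i) $\Rightarrow$ (ii), the plan is to reduce matters to Theorem \ref{necessity on domains} by producing an estimate of the form $\|D_{jk}(\Phi\ast f)\|_{L^p(\Omega',X)}\leq C\|f\|_{L^p(\Omega',X)}$ for some nonempty open $\Omega'\subset\R^d$ and all $f\in\Cci(\Omega',X)$. Hypothesis (i) yields a linear solution map $T:L^p(\Omega,X)\to W^{2,p}(\Omega,X)\cap W^{1,p}_0(\Omega,X)$, $Tf=u$; since $\Delta$ is continuous from $W^{2,p}$ to $L^p$ and $W^{1,p}_0$ is closed in $W^{1,p}$, the graph of $T$ is closed, so $T$ is bounded by the closed graph theorem. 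Fix an open ball $\Omega'$ with $\overline{\Omega'}\ssubset\Omega$. For $f\in\Cci(\Omega',X)$, extend $f$ by zero to $\Omega$ and set $u:=Tf$ and $v:=\Phi\ast f$; both satisfy $\Delta\cdot=f$ on $\Omega$, so $w:=v-u$ is harmonic on $\Omega$ (using that $\Delta w=0$ distributionally and Weyl's lemma, Theorem \ref{Weyl}), and $D_{jk}v=D_{jk}u+D_{jk}w$ on $\Omega'$.

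The $u$-term is handled directly by boundedness of $T$: $\|D_{jk}u\|_{L^p(\Omega,X)}\leq\|u\|_{W^{2,p}(\Omega,X)}\leq C\|f\|_{L^p(\Omega,X)}$. For the $w$-term, note first that since $\supp f\subset\Omega$ is bounded, on $\Omega$ the convolution $\Phi\ast f$ equals convolution with the truncation $\Phi\chi_{B(0,R)}$ for $R$ larger than the diameter of $\Omega$; this truncation lies in $L^1(\R^d)$ because $\Phi\in L^1_\textnormal{loc}(\R^d,\R)$, so the vector-valued Young inequality gives $\|v\|_{L^p(\Omega,X)}\leq C'\|f\|_{L^p(\Omega,X)}$, whence $\|w\|_{L^p(\Omega,X)}\leq C''\|f\|_{L^p(\Omega,X)}$. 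Since $w$ is harmonic on $\Omega$ and $\overline{\Omega'}\ssubset\Omega$, the interior derivative estimate for harmonic functions yields $\|D_{jk}w\|_{L^\infty(\Omega',X)}\leq C(\Omega',\Omega)\|w\|_{L^p(\Omega,X)}$, hence $\|D_{jk}w\|_{L^p(\Omega',X)}\leq C'''\|w\|_{L^p(\Omega,X)}$. Assembling the bounds and using $\|f\|_{L^p(\Omega,X)}=\|f\|_{L^p(\Omega',X)}$ produces the desired estimate on $\Omega'$, and Theorem \ref{necessity on domains} applied to $\Omega'$ delivers UMD.

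The main point to be careful about is the vector-valued interior derivative estimate for harmonic functions, which I would derive from the scalar case: apply the classical estimate $|D_{jk}\langle w,x'\rangle(\xi)|\leq Cr^{-d/p-2}\|\langle w,x'\rangle\|_{L^p(B(\xi,r))}$ to the scalar harmonic function $\langle w,x'\rangle$ on a suitable ball, observe via Lemma \ref{weakly holomorphic harmonic}(b) that $\langle D_{jk}w,x'\rangle=D_{jk}\langle w,x'\rangle$, and take the supremum over $x'\in B_{X'}$ to get $\|D_{jk}w(\xi)\|_X\leq Cr^{-d/p-2}\|w\|_{L^p(B(\xi,r),X)}$. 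A small but essential design choice is to work on a strictly interior $\Omega'\ssubset\Omega$ rather than on $\Omega$ itself, since the harmonic remainder $w$ carries the nonzero boundary values of $\Phi\ast f$ and so its second derivatives cannot be controlled in $L^p(\Omega,X)$ up to the boundary.
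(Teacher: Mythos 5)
Your proof is correct and reaches the same reduction (Theorem \ref{necessity on domains}) as the paper, but by a genuinely different route. The paper applies the closed graph theorem to the \emph{Newtonian potential map} $f\mapsto\Phi\ast\tilde f$, viewed as a map $L^p(\omega,X)\to W^{2,p}(\omega,X)$ for a nice $\omega\ssubset\Omega$: well-definedness is shown exactly as you do (via the solution from (i) and Weyl's lemma), but closedness of the graph is then verified by testing against functionals and invoking the scalar ($X=\R$) case of Proposition \ref{Lp estimates Laplacian} to identify the limit with $\Phi\ast f$. You instead apply the closed graph theorem to the \emph{solution map} $T$ itself, which closes much more cheaply (continuity of $\Delta:W^{2,p}\to L^p$, closedness of $W^{1,p}_0$, and uniqueness), and then you pay for that simplicity by having to estimate the harmonic remainder $w=\Phi\ast f-Tf$ directly: you bound $\|w\|_{L^p(\Omega,X)}$ via Young's inequality with the truncated kernel, then control $D_{jk}w$ on a strictly interior $\Omega'$ through the interior derivative estimate for harmonic functions, transferred from the scalar case by duality (which is legitimate since $w$ is smooth and $\langle D_{jk}w,x'\rangle=D_{jk}\langle w,x'\rangle$). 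Both arguments must retreat to a strictly interior subdomain for the same reason --- the harmonic remainder need not be $W^{2,p}$ up to $\partial\Omega$ --- and both ultimately hand an estimate of the required form to Theorem \ref{necessity on domains}. Your version avoids any appeal to the scalar UMD multiplier bound in the closedness step, at the cost of an extra (elementary) harmonic interior estimate; the paper's version avoids that estimate but has a subtler identification of the weak limit. Both are sound; yours is arguably slightly more self-contained given the tools already developed in Sections 2--3.
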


\begin{proof}
It remains to show the implication $(i)\Rightarrow(ii)$. Let $\omega\ssubset\Omega$ be nonempty with a $C^{1,1}$ boundary. For $f\in L^p(\omega,X)$ denote by $\tilde{f}$ the extension to $\R^d$ by $0$. Recall that $w:=\Phi\ast\tilde{f}\in L^1_\textnormal{loc}(\R^d,X)$ satisfies $\Delta w=\tilde{f}$ in $\mathcal{D}'(\Omega,X)$.

Let $u$ be the solution for $\tilde{f}$ according to $(i)$. Then $u-w$ solves $\Delta(u-w)=0$. Theorem \ref{Weyl} shows that $u-w$ has a harmonic representative which is in particular in $C^2(\Omega,X)$. Since $u\in W^{2,p}(\omega,X)$ we also have $w=u-(u-w)\in W^{2,p}(\omega,X)$. Hence the Newtonian potential defines a mapping from $L^p(\omega,X)$ into $W^{2,p}(\omega,X)$. We claim that the graph of this mapping is closed. Let $f_n\rightarrow f\in L^p(\omega,X)$ such that $\Phi\ast f_n\rightarrow w\in W^{2,p}(\omega,X)$. Then for every $x'\in X'$ the functions $\langle f_n,x'\rangle,\langle f,x'\rangle$ and $\langle w,x'\rangle$ satisfy the analogue. $\R$ has the UMD property and thus Theorem \ref{Lp estimates Laplacian} shows that $\langle w,x'\rangle=\langle\Phi\ast f,x'\rangle$. Choosing $x'$ from a countable separating subset of $X'$ \cite[Proposition B.1.10]{HytonenetalAnalysisinBanachspaces} yields the claim. Now the closed graph theorem shows the existence of a constant $C\geq0$ such that $\|\Phi\ast f\|_{W^{2,p}(\omega,X)}\leq C\|f\|_{L^p(\omega,X)}$ for all $f\in L^p(\omega,X)$. Finally, Theorem \ref{necessity on domains} shows that $X$ has the UMD property.
\end{proof}

We want to relate Corollary \ref{PoissonUMD} to the generator of the Dirichlet Laplacian. At first we establish an abstract result.

\begin{lemma}
Let $\Omega\subset\R^d$ be an open set, $1\leq p<\infty$ and let $T$ be a positive strongly continuous semigroup on $L^p(\Omega,\R)$ with generator $A$. Let $X$ be a Banach space. Then there exists a unique strongly continuous semigroup $\tilde{T}$ on $L^P(\Omega,X)$ satisfying $x'\circ\tilde{T}(t)f=T(t)(x'\circ f)$ for all $f\in L^p(\Omega,X)$ and all $x'\in X'$. Denote by $\tilde{A}$ its generator. Let $f,g\in L^p(\Omega,X)$. Then $f\in D(\tilde{A})$ and $\tilde{A}f=g$ if and only if $x'\circ f\in D(A)$ and $A(x'\circ f)=x'\circ g$ for all $x'\in X'$.
\end{lemma}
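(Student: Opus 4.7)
My plan is to build $\tilde T$ by applying the tensor extension theorem for positive operators \cite[Theorem 2.1.3]{HytonenetalAnalysisinBanachspaces} (already used in the proof of Proposition \ref{estimate without u}) to each operator $T(t)$ separately. For fixed $t\ge 0$ the operator $T(t)$ is positive on $L^p(\Omega,\R)$, so it extends uniquely to a bounded operator $\tilde T(t)$ on $L^p(\Omega,X)$ with $\|\tilde T(t)\|=\|T(t)\|$, whose action on a tensor $h\otimes x$ is $T(t)h\otimes x$. The semigroup laws $\tilde T(s+t)=\tilde T(s)\tilde T(t)$ and $\tilde T(0)=I$, as well as the compatibility identity $x'\circ \tilde T(t)f=T(t)(x'\circ f)$, are immediate on finite sums of tensors; both sides of each identity are linear and continuous in $f$ (note that $(x'\circ\,\cdot\,)$ is bounded $L^p(\Omega,X)\to L^p(\Omega,\R)$ with norm at most $\|x'\|$), so density of the tensors extends the identities to all of $L^p(\Omega,X)$. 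Uniqueness of $\tilde T$ is then immediate: any such semigroup has $x'\circ\tilde T(t)f$ determined for every $x'\in X'$, and the Hahn--Banach theorem pins down $\tilde T(t)f$.

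Strong continuity of $\tilde T$ follows by a standard $3\varepsilon$ argument. Since $T$ is a $C_0$-semigroup there are constants $M,\omega\ge 0$ with $\|T(t)\|\le Me^{\omega t}$, and the extension preserves the norm, so $\|\tilde T(t)\|\le Me^{\omega t}$ as well. On a finite sum of tensors strong continuity at $t=0$ is inherited from that of $T$ on each scalar factor. For arbitrary $f\in L^p(\Omega,X)$ one approximates by a finite tensor sum and uses the uniform bound on $\|\tilde T(t)\|$ for $t\in[0,1]$ to conclude.

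The nontrivial part is the characterization of $D(\tilde A)$. The forward direction is a one-line consequence of the continuity of $f\mapsto x'\circ f$: if $(\tilde T(t)f-f)/t\to g$ in $L^p(\Omega,X)$, then $(T(t)(x'\circ f)-(x'\circ f))/t \to x'\circ g$ in $L^p(\Omega,\R)$, giving $x'\circ f\in D(A)$ and $A(x'\circ f)=x'\circ g$. For the converse, suppose $x'\circ f\in D(A)$ and $A(x'\circ f)=x'\circ g$ for every $x'\in X'$. The fundamental $C_0$-semigroup identity for elements of $D(A)$ gives
\[
T(t)(x'\circ f)-(x'\circ f)=\int_0^t T(s)(x'\circ g)\,ds=\int_0^t x'\circ\tilde T(s)g\,ds=x'\circ\int_0^t\tilde T(s)g\,ds,
\]
where the last equality uses continuity of $(x'\circ\,\cdot\,)$ and the standard commutation with the Bochner integral. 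Combined with the compatibility, this reads $x'\circ(\tilde T(t)f-f)=x'\circ\int_0^t\tilde T(s)g\,ds$ for every $x'\in X'$, and Hahn--Banach therefore yields $\tilde T(t)f-f=\int_0^t\tilde T(s)g\,ds$ in $L^p(\Omega,X)$. Dividing by $t$ and letting $t\downarrow 0$, strong continuity of $\tilde T$ gives $t^{-1}\int_0^t\tilde T(s)g\,ds\to g$, so $f\in D(\tilde A)$ with $\tilde A f=g$. The main obstacle I anticipate is exactly this converse direction: duality with $X'$ a priori provides only pointwise-in-$x'$ information, but the integral representation lifts it to genuine $L^p(\Omega,X)$-norm convergence via Hahn--Banach, bypassing any weak-to-strong argument.
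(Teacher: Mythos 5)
Your proof is correct and takes essentially the same route as the paper: the same tensor-extension theorem from Hytönen et al.\ is used to build $\tilde T$, and the generator characterization rests on the same integral identity $\tilde T(t)f-f=\int_0^t\tilde T(s)g\,ds$ together with the fact that Bochner integrals commute with functionals. You merely spell out the Hahn--Banach step and the $3\varepsilon$ argument for strong continuity, which the paper leaves implicit.
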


\begin{proof}
By \cite[Theorem 2.1.3]{HytonenetalAnalysisinBanachspaces} there is a unique bounded operator $\tilde{T_t}$ on $L^p(\Omega,X)$ such that $\tilde{T_t}(f\otimes x)=T_tf\otimes x$ for all $f\in L^p(\Omega,\R)$ and all $x\in X$. This is the same as saying that $x'\circ\tilde{T_t}f=T_t(x'\circ f)$ for all $f\in L^p(\Omega,X)$ and $x'\in X'$. It is obvious from the first property that $\tilde{T}:=(\tilde{T_t})_{t\geq0}$ is a strongly continuous semigroup on $L^p(\Omega,X)$. For $f,g\in L^p(\Omega,X)$ one has $f\in D(\tilde{A})$ and $\tilde{A}f=g$ if and only if $\int_0^t{\tilde{T_s}g\,ds}=\tilde{T_t}f-f$ for all $t>0$. Using this and the corresponding assertion for $A$ the last claim follows from the fact that the integral commutes with functionals.
\end{proof}

Now let $\Omega\subset\R^d$ be open and bounded with a $C^{1,1}$ boundary and let $1\leq p<\infty$. Then the operator $A$ given by
\begin{align*}
D(A)&:=W^{1,p}_0(\Omega,\R)\cap W^{2,p}(\Omega,\R)\\
Au&:=\Delta u
\end{align*}
generates a positive strongly continuous semigroup $T$ on $L^p(\Omega,\R)$. Consider the induced semigroup $\tilde{T}$ on $L^p(\Omega,X)$, where $X$ is a Banach space, and denote by $\tilde{A}$ its generator. Then clearly
\begin{align*}
W^{1,p}_0(\Omega,X)\cap W^{2,p}(\Omega,X)\subset D(\tilde{A})
\end{align*}
by the preceding lemma. The identity does not hold in general:

\begin{corollary}
Let $1<p<\infty$ and let $T,A,\tilde{T}$ and $\tilde{A}$ be as above. Then
\begin{align*}
W^{1,p}_0(\Omega,X)\cap W^{2,p}(\Omega,X)=D(\tilde{A})
\end{align*}
if and only if $X$ has the UMD property.
\end{corollary}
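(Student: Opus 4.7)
The plan is to view both conditions in the corollary as bijectivity of the distributional Laplacian on the domain $W^{1,p}_0\cap W^{2,p}$, and then to bridge to Corollary \ref{PoissonUMD}. The inclusion $W^{1,p}_0(\Omega,X)\cap W^{2,p}(\Omega,X)\subset D(\tilde A)$ is already in hand, so only the converse inclusion and its equivalence with UMD remain.

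First I would record two preliminary facts. The first is that $\tilde A$ agrees with the distributional Laplacian on $D(\tilde A)$: by the preceding lemma, $\tilde A u=g$ is equivalent to $\Delta(x'\circ u)=x'\circ g$ in $\mathcal{D}'(\Omega,\R)$ for every $x'\in X'$, and since $X'$ separates points this amounts to $\Delta u=g$ in $\mathcal{D}'(\Omega,X)$. The second is that $0\in\rho(\tilde A)$. Here I would use that the real Dirichlet Laplacian $A$ has $0\in\rho(A)$ (since $\Omega$ is bounded) and that $-A^{-1}$ is precisely the positive operator from Theorem \ref{strong solution existence real and positive}(b); its positivity-preserving extension $S$ to $L^p(\Omega,X)$, delivered by \cite[Theorem 2.1.3]{HytonenetalAnalysisinBanachspaces}, satisfies $x'\circ Sf=-A^{-1}(x'\circ f)$ for all $x'\in X'$. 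The functional characterisation above then shows $Sf\in D(\tilde A)$ with $\tilde A Sf=-f$ and, symmetrically, $S\tilde A u=-u$ on $D(\tilde A)$; hence $\tilde A$ is bijective with $\tilde A^{-1}=-S$.

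For the ``UMD $\Rightarrow$ equality'' direction, given $f\in D(\tilde A)$ I set $g:=\tilde A f$ and apply Theorem \ref{strong solution existence} with $L=\Delta$, $c=0$, $\varphi=0$ to obtain a unique $v\in W^{1,p}_0(\Omega,X)\cap W^{2,p}(\Omega,X)$ with $\Delta v=g$. Since $v$ automatically lies in $D(\tilde A)$ with $\tilde A v=\Delta v=g=\tilde A f$, injectivity of $\tilde A$ forces $f=v$, so $f$ lies in the Sobolev space. Conversely, if $D(\tilde A)=W^{1,p}_0(\Omega,X)\cap W^{2,p}(\Omega,X)$, I would verify condition (i) of Corollary \ref{PoissonUMD}: for $f\in L^p(\Omega,X)$ the element $u:=-Sf$ lies in $D(\tilde A)=W^{1,p}_0\cap W^{2,p}$ and satisfies $\Delta u=\tilde A u=f$, with uniqueness again from injectivity of $\tilde A$. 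Corollary \ref{PoissonUMD} then yields the UMD property.

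The principal obstacle I expect is the identification $\tilde A^{-1}=-S$ (and hence $0\in\rho(\tilde A)$); once this is secured, the remainder is formal bookkeeping with the generator/domain correspondence supplied by the preceding lemma. A point of care is that the positivity of $-A^{-1}$, which powers the tensor extension, is available only because $\Omega$ has a $C^{1,1}$ boundary, via Theorem \ref{strong solution existence real and positive}(b).
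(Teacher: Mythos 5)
Your proposal is correct and its overall architecture matches the paper's: both proofs reduce the corollary to Corollary \ref{PoissonUMD} once invertibility of $\tilde A$ is known. The only genuine difference is how that invertibility is established. You construct $\tilde A^{-1}$ explicitly as $-S$, the tensor extension (via \cite[Theorem 2.1.3]{HytonenetalAnalysisinBanachspaces}) of the positive operator $-A^{-1}$ from Theorem \ref{strong solution existence real and positive}(b), and then verify the two inverse identities through the preceding lemma's functional characterisation of $D(\tilde A)$. The paper instead invokes the same tensor extension theorem in a slicker way: it implies $\|\tilde T_t\|=\|T_t\|$ for each $t$, so the exponential decay $\|T_t\|\le Me^{-\varepsilon t}$ of the real Dirichlet heat semigroup transfers to $\tilde T$, giving a strictly negative growth bound and hence $0\in\rho(\tilde A)$ immediately. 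The paper's route is shorter and avoids having to verify $\tilde A S=-I$ and $S\tilde A=-I$ by hand; your route is more explicit and makes the identity $\tilde A^{-1}=-S$ available for free, which is not needed here but is a cleaner statement. Your subsequent bookkeeping (UMD gives $\Delta\colon W^{1,p}_0\cap W^{2,p}\to L^p$ bijective via Theorem \ref{strong solution existence}, injectivity of $\tilde A$ then forces $D(\tilde A)\subset W^{1,p}_0\cap W^{2,p}$; conversely equality of domains yields condition (i) of Corollary \ref{PoissonUMD}) is exactly the content hidden behind the paper's terse ``Now Corollary \ref{PoissonUMD} yields the claim.''
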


\begin{proof}
By \cite[Theorem 2.1.3]{HytonenetalAnalysisinBanachspaces} the norms of $T$ and $\tilde{T}$ coincide. Since $\|T_t\|\leq Me^{-\varepsilon t}$ for all $t>0$ the same estimate holds for $\tilde{T}$. Thus $\tilde{A}$ is invertible. Now Corollary \ref{PoissonUMD} yields the claim.
\end{proof}

\bibliographystyle{abbrv}
\def\cprime{$'$}

\end{document}